\documentclass[11pt,reqno]{amsart}
\usepackage{amsthm,amsfonts,amssymb,euscript}

\setlength{\textwidth}{16.0cm} \setlength{\oddsidemargin}{0.5cm}
\setlength{\evensidemargin}{0.5cm}

\def\ddb#1{\sqrt{-1}\partial\bar{\partial}#1}
\def\dt#1{\frac{\partial}{\partial t}#1}

\newcommand{\B}{{\mathbf B}_{\beta}}

\newcommand{\C}{{\mathbb C}^{n}}

\newcommand{\dr}{\omega}
\newcommand{\ka}{K\"{a}hler}

\theoremstyle{plain}
  \newtheorem{theorem}{Theorem}[section]
  \newtheorem{proposition}[theorem]{Proposition}
  \newtheorem{lemma}[theorem]{Lemma}
  \newtheorem{corollary}[theorem]{Corollary}

\theoremstyle{definition}

\numberwithin{equation}{section}
\begin{document}

\title[$C^{2,\alpha}$-estimate for conical K\"{a}hler-Ricci flow]{$C^{2,\alpha}$-estimate for conical K\"{a}hler-Ricci flow}
\author{Liangming Shen}
\address{Department of Mathematics, Princeton University, Princeton, NJ 08544, USA.}
\email{liangmin@math.princeton.edu}

\begin{abstract}
      In this note, we establish a parabolic version of Tian's $C^{2,\alpha}$-estimate for conical complex Monge-Ampere equations \cite{Ti14}, 
which includes conical \ka-Einstein metrics. Our estimate will complete the proof of the existence of
unnormalized conical \ka-Ricci flow in \cite{Sh}.
\end{abstract}

\maketitle


\section{introduction}\label{section1}

Regularity of complex Monge-Ampere equations in conic setting is an important issue in the study of conic \ka\ metrics. There have been
some works on conic metrics \cite{Ber} \cite{Br} \cite{CGP} \cite{Do} \cite{GP} \cite{LS} \cite{Je} \cite{JMR} \cite{LS} and the
study of conic metrics plays an important role in the solution of Yau-Tian-Donaldson conjecture \cite{Ti12} \cite{CDS1} \cite{CDS2} \cite{CDS3}.
As the $C^{0}$-estimate and $C^{2}$-estimate is not enough to obtain the solution to existence problems,, we need to get at least $C^{2,\alpha}$-
estimate to guarantee that the inverse function theorem can be applied. For conic metrics, in smooth part, it is easy to apply Calabi's
$C^{3}$-estimate as smooth case \cite{Yau}. For the conic singularities, when the cone angle $\beta<\frac{1}{2},$ even Calabi's $C^{3}$-
estimate still holds \cite{Br}. However, when $\beta\geq\frac{1}{2},$ the $C^{3}$-estimate seems not to hold any more so we can expect only a
$C^{2,\alpha}$-estimate. In conic setting, we are still not sure that whether classic Evans-Krylov-Safanov theory (see \cite{GT}) holds, as
we are not clear whether the subtle argument of choosing squares and measure estimates can be extended to conic setting. 
Actually there have been some works on extension of this classic theory to conic setting \cite{CW1} \cite{CW2} \cite{GP} \cite{W}.
When the metric is sufficiently close to the standard flat conic metric, perturbation method of the Schauder estimate can be used to derive a 
$C^{2,\alpha}$-estimate \cite{CDS2}. However we don't know the metric involved can be made sufficiently close to a standard flat conic metric
before we establish an estimate than the $C^{2}$-estimate. In \cite{Ti14}, Tian adapted the method of his PKU master thesis \cite{Ti83} to 
conic case and proved the $C^{2,\alpha}$-estimate for conical complex Monge-Ampere equations without the closeness assumption. The main idea 
of Tian's approach is to compare the second derivative to harmonic forms whose regularity was studied before.

      In this paper, we extend Tian's idea in \cite{Ti14} to derive a $C^{2,\alpha}$-estimate for conical \ka-Ricci flow, which completes the
proof of the existence theorem in \cite{Sh}. Conical \ka-Ricci flow is another interesting topic in the study of conic \ka\ metrics and there
have been some works on it \cite{CW1} \cite{CW2} \cite{LZ} \cite{MRS} \cite{Sh} \cite{W}. To establish the existence of conical \ka-Ricci flow
we need a parabolic version of $C^{2,\alpha}$-estimate. In deriving our estimate, smilar to what Tian did, we need to use regularity properties
of heat equations.

      Now let's recall the construction of unnormalized \ka-Ricci flow in \cite{Sh}. Suppose the divisor $D=\sum\limits_{i=1}^{l}(1-
\beta_{i})D_{i}(l\leq n)$ is a simply normal crossing divisor on a n dimensional compact \ka\ manifold $(M,\dr_{0}$, where each $D_{i}$ is
an irreducible divisor and $\dr_{0}$ is a smooth background \ka\ metric. Now consider unnormalized \ka-Ricci flow
\begin{equation}\label{eq:ckrf}
 \left\{ \begin{array}{rcl}
&\dt\dr=-Ric(\dr)+2\pi\sum\limits_{i=1}^{l}(1-\beta_{i})[D_{i}]\\&\dr(0)=\dr^{*}=\dr_{0}+\sum\limits_{i=1}^{l}k\ddb||S_{i}||^{2\beta_{i}}
        \end{array}\right.
\end{equation}
where $S_{i}$ is a defining holomorphic section of $[D_{i}],$ and we know that when k is a small positive number $\dr^{*}$ is a conic
\ka\ metric with cone angles $2\pi\beta_{i}$ along each $D_{i}.$ In \cite{Sh} we apply the methods of \cite{TZ} \cite{ST3} to consider
this equation in the cohomology level, i.e, write $\overline{\dr}_{t}=\dr^{*}-t(Ric(\Omega)-\sum\limits_{i=1}^{l}(1-\beta_{i})R(||\cdot||_
{i})),$ and$\dr=\overline{\dr}_{t}+\ddb\varphi,$ where $\Omega$ is a smooth volume form and $R(||\cdot||_{i}))$ is corresponding curvature
form of the line bundle $[D_{i}].$ Then the equation \eqref{eq:ckrf} can be transformed to a Monge-Ampere type equation:
\begin{equation}\label{eq:conic-ma}
 \left\{ \begin{array}{rcl}
&\dt\varphi=\log\frac{(\overline{\dr}_{t}+\ddb\varphi)^{n}}{\Omega}+\sum\limits_{i=1}^{l}\log||S_{i}||^{2(1-
\beta_{i})}\\&\varphi(\cdot,0)=0
        \end{array}\right.
\end{equation}
Recall that in \cite{Sh}, we apply the approximation techniques of \cite{CGP} \cite{GP} \cite{LZ} and estimates in \cite{TZ} \cite{ST3} to
obtain $C^{0}$-estimate and Laplacian estimate for $\varphi,$ i.e, for some uniform constant A,
\begin{equation}\label{eq:laplacian}
 A^{-1}\overline{\dr}_{0}\leq\dr=\overline{\dr}_{t}+\ddb\varphi\leq A\overline{\dr}_{0}
\end{equation}
on the time interval $[0,T]$ where $$T<T_{0}:=\sup\{t>0|[\dr_{0}]-t(c_{1}(M)-\sum\limits_{i=1}^{l}(1-\beta_{i})[D_{i}])>0\}.$$ What is missing
in \cite{Sh} is a H\"{o}lder estimate for $\ddb\varphi,$ i.e, the $C^{2,\alpha}$-estimate. We will follow the approach in \cite{Ti14}
to get the following main theorem by comparing the flow solution with the solutions to heat equations:
\begin{theorem}\label{mainthm}
 For any $\alpha\in(0,\overline{\beta}^{-1}-1,1\}),$ where $\overline{\beta}:=\max\{\beta_{1},\cdots,\beta_{l}\},$ there exist constants
$r_{0}\in(0,1)$ and $C_{\alpha}>0$ such that for any $(x_{0},t_{0})\in M\times(0,T_{0})$ and $0<r<r_{0},$
\begin{equation}\label{eq:3rd}
 \iint_{P_{r}(x_{0},t_{0})}|\nabla^{3}\varphi|^{2}\overline{\dr}_{0}^{n}dt\leq C_{\alpha}r^{2n+2\alpha},
\end{equation}
 where $P_{r}(x_{0},t_{0})=B_{r}(x_{0})\times(t_{0}-r^{2},t_{0}],$ and $B_{r}(x_{0})$ is the geodesic ball with center $x_{0}$
and radius r, and $\nabla$ is the covariant derivative, the norm is taken with respect to $\overline{\dr}_{0}.$
\end{theorem}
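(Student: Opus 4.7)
The plan is to adapt the elliptic comparison argument of Tian \cite{Ti14} to the parabolic setting: first derivatives of $\varphi$ will be compared with solutions of a model heat equation on the flat conic product, and a Morrey--Campanato iteration will then convert the resulting decay estimate into \eqref{eq:3rd}. Thanks to the uniform two-sided bound \eqref{eq:laplacian}, away from the divisor $D$ the equation \eqref{eq:conic-ma} is smooth and uniformly parabolic, so classical Krylov--Safonov and parabolic Schauder theory yield the estimate there; I therefore fix $(x_0,t_0)$ in a small uniform neighbourhood of $D$ and work in a local chart in which $\overline{\dr}_t$ is a bounded perturbation of the standard flat conic metric $\dr_\beta$ on $\mathbb{C}^n$. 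Differentiating \eqref{eq:conic-ma} in a holomorphic direction $\partial_a$ produces, for $\psi = \partial_a\varphi$, a linear heat-type equation
\[(\partial_t - \Delta_\dr)\psi = G_a,\]
in which $\Delta_\dr$ is quasi-isometric to the flat conic Laplacian $\Delta_\beta$ by \eqref{eq:laplacian}, and the source $G_a$ is controlled by lower-order derivatives of $\overline{\dr}_t$, of $\log\Omega$, and of the conic potentials $\|S_i\|^{2\beta_i}$.

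\textbf{Comparison with a model heat solution.} On each parabolic cylinder $P_r(x_0,t_0)$ I freeze coefficients at $(x_0,t_0)$ and let $h$ solve the Dirichlet--initial problem for the model heat operator $\partial_t - \Delta_\beta$ in $P_r$, with boundary data $h=\psi$ on the parabolic boundary. The regularity theory for heat equations on flat conic products, whose elliptic counterpart drives Tian's argument in \cite{Ti14} and whose parabolic counterpart is provided by the heat-kernel analysis of \cite{JMR}, \cite{CW1}, \cite{CW2}, yields a Morrey-type decay
\[\iint_{P_\rho}|\nabla h|^2\,\overline{\dr}_0^n\,dt \le C\brk{\rho/r}^{2n+2\alpha}\iint_{P_r}|\nabla h|^2\,\overline{\dr}_0^n\,dt\]
for all $\rho\le r/2$ and every admissible $\alpha$. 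The difference $\psi-h$ then vanishes on the parabolic boundary of $P_r$ and satisfies a heat equation whose source encodes both the discrepancy $(\Delta_\dr-\Delta_\beta)h$ at scale $r$ and the term $G_a$; a standard energy inequality, combined with \eqref{eq:laplacian} and the $C^{0}$-bound on $\varphi$, produces
\[\iint_{P_r}|\nabla(\psi-h)|^2\,\overline{\dr}_0^n\,dt \le Cr^{2n+2\alpha+2\varepsilon}\]
for some $\varepsilon>0$.

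\textbf{Iteration and main obstacle.} Combining the two bounds above in the usual Campanato fashion yields $\iint_{P_\rho}|\nabla\psi|^2\overline{\dr}_0^n\,dt\le C\rho^{2n+2\alpha}$ for every $\rho\le r$, and summing over a basis of tangential and transversal first derivatives $\psi=\partial_a\varphi$ delivers \eqref{eq:3rd}. The main difficulty is twofold. First, one must justify the sharp parabolic decay exponent $\overline{\beta}^{-1}-1$ for solutions of the model conic heat equation; this parabolic analogue of the homogeneity of the first non-constant eigenfunction on the link of the cone is precisely what forces the restriction on $\alpha$ in the theorem. Second, one must control the discrepancy between $\Delta_\dr$ and the model Laplacian $\Delta_\beta$, as well as the singular contributions to $G_a$ coming from transversal derivatives of $\log\|S_i\|^{2(1-\beta_i)}$; since $\overline{\dr}_t$ is a priori only quasi-isometric --- not close in any higher norm --- to the flat conic model, this is the very obstruction that Tian's elliptic argument must overcome, and it has to be re-engineered here in the parabolic framework.
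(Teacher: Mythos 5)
There is a genuine conceptual gap, and it comes in two pieces. First, you work one derivative order too low. Setting $\psi = \partial_a\varphi$ and establishing $\iint_{P_\rho}|\nabla\psi|^2 \leq C\rho^{2n+2\alpha}$ controls $\nabla^2\varphi$ in a Morrey norm, which by the Campanato embedding gives $\nabla\varphi \in C^\alpha$, i.e.\ $C^{1,\alpha}$. But \eqref{eq:3rd} is a bound on $|\nabla^3\varphi|^2$, needed to get $\nabla^2\varphi \in C^\alpha$. The paper differentiates \eqref{eq:conic-potential} \emph{twice}, works with the Hessian form $\dr = \ddb u$, and the quantity $|\nabla\dr|$ is precisely the third derivative. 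Differentiating twice also introduces the quadratic term $u^{i\bar q}u^{p\bar j}u_{i\bar jk}u_{p\bar q\bar l}$ into the source of the Hessian equation; absorbing it requires the sign structure exploited in Lemma \ref{lem-3.4}, which your first-order linearization never sees.

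Second, your choice of comparison operator $\Delta_\beta$ cannot work, and you in fact acknowledge this without resolving it. The bound \eqref{eq:laplacian} gives only \emph{uniform equivalence} $C^{-1}\dr_\beta \leq \dr \leq C\dr_\beta$, so the discrepancy $\Delta_\dr - \Delta_\beta$ is $O(1)$, not small at any scale; the energy estimate you invoke for $\psi - h$ would produce an error of the same size as the main term, not a gain of $r^{2\varepsilon}$. The paper instead compares with the \emph{frozen-coefficient} operator $\sum_i \lambda_i^{-1}\partial_i\partial_{\bar i}$, where $\lambda_{i\bar j}$ is the average of $u_{i\bar j}$ on $P(r)$, producing a $(1,1)$-form solution $v$ of \eqref{eq:heat}. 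The resulting coefficient discrepancy $u_{i\bar j} - \lambda_i\delta_{ij}$ is small only in an $L^2$-average sense, and upgrading that average smallness into a usable energy bound is exactly what the reverse H\"older / Gehring estimate of Lemma \ref{lem-3.3} and the small-energy Lemma \ref{lem-small energy} provide, via the exponent $(q-2)/q$ appearing in \eqref{eq:local control}. Without this mechanism, there is no way to close the iteration: your own last paragraph names the obstruction (``the very obstruction that Tian's elliptic argument must overcome'') but supplies no replacement for the Gehring step, so the Campanato iteration you propose does not converge. The parabolic ingredients the paper also needs --- the scalar-curvature lower bound of Lemma \ref{lem:curvature} to control the time derivative of the volume form, and the weak Harnack inequality of Proposition \ref{prop-weak harnack} feeding Lemma \ref{lem-small energy} --- are absent from your outline as well, and they are not cosmetic.
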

 Provided this theorem, by standard analysis arguments, we have the following corollary, which gives the $C^{2,\alpha}$-estimate:
\begin{corollary}\label{cor-holder}
 For any $\alpha\in(0,\min\{\overline{\beta}^{-1}-1,1\}),$ $\ddb\varphi$ is $C^{\alpha}$-bounded with respect to
$\overline{\dr}_{0}.$
\end{corollary}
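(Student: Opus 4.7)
The plan is to derive the corollary from Theorem~\ref{mainthm} via the parabolic version of the Campanato characterization of H\"older continuous functions. The parabolic dimension of $M\times(0,T_{0})$, with $M$ of real dimension $2n$ and time weighted by $2$, is $Q=2n+2$, so the exponent $r^{2n+2\alpha}$ in \eqref{eq:3rd} is exactly the Morrey exponent $r^{Q-2+2\alpha}$ associated to $L^{2}$-gradient control with H\"older exponent $\alpha$. Writing $u$ for any component of $\nabla^{2}\varphi$ in a local frame adapted to $\overline{\dr}_{0}$, estimate \eqref{eq:3rd} says precisely that $\nabla u$ has uniform Morrey-type decay of order $\alpha$ over all parabolic cylinders $P_{r}(x_{0},t_{0})$ with $r<r_{0}$.

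Next I would translate this gradient bound into an oscillation bound on $u$ of the form
\begin{equation*}
\iint_{P_{r}(x_{0},t_{0})}|u-u_{P_{r}(x_{0},t_{0})}|^{2}\,\overline{\dr}_{0}^{n}\,dt\le Cr^{Q+2\alpha},
\end{equation*}
by combining the spatial Poincar\'e inequality on each time slice with a control on the time oscillation of $u$. The latter is obtained by differentiating the flow equation \eqref{eq:conic-ma} and using the $C^{0}$ and Laplacian bounds from \cite{Sh} together with \eqref{eq:laplacian}. The Campanato embedding in the parabolic setting then gives that $u$ is parabolically $\alpha$-H\"older,
\begin{equation*}
|u(x,t)-u(y,s)|\le C_{\alpha}\bigl(d_{\overline{\dr}_{0}}(x,y)+|t-s|^{1/2}\bigr)^{\alpha},
\end{equation*}
and restricting to a fixed time $t$ produces the desired spatial $C^{\alpha}$-bound on $\ddb\varphi$ with respect to $\overline{\dr}_{0}$.

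The main obstacle is to justify the Morrey-Campanato machinery in the conical setting: one needs uniform constants, independent of $x_{0}$, in the volume doubling property and in the Poincar\'e inequality for the balls $B_{r}(x_{0})$, even when $x_{0}$ lies on the divisor $D$. For the model flat conical metric these properties are classical, and for $\overline{\dr}_{0}$ they are inherited via the two-sided comparison \eqref{eq:laplacian} together with the approximation scheme developed in \cite{CGP}, \cite{GP}, \cite{Sh}. A secondary technical point is the time-oscillation control required in the Campanato step, which for our flow reduces to the already-established $C^{0}$ and Laplacian bounds on $\varphi$ combined with \eqref{eq:conic-ma}. Once uniformity is confirmed in these two ingredients, the corollary follows from Theorem~\ref{mainthm} by the routine parabolic Campanato/Morrey argument outlined above.
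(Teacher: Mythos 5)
Your proposal matches the paper's argument, which dispatches the corollary through Lemma~\ref{lem-local integral}: Poincar\'e on time slices converts the $L^{2}$-gradient Morrey decay of \eqref{eq:3rd} into a Campanato oscillation bound on parabolic cylinders, and then the Campanato characterization (Theorem 3.1, p.~48 of \cite{HL}) yields the $C^{\alpha}$ bound. Your version is a bit more explicit about the time-oscillation control (which the paper folds into the ``smallest square theorem'' step) and about the uniformity of the Poincar\'e and doubling constants near $D$, but it is the same Morrey--Campanato route.
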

       The note will be organized as below. First, begin with the Monge-Ampere type equation \eqref{eq:conic-ma}, we derive a second order
parabolic equation for complex Hessian of $\varphi.$ Then we review some facts on heat equation, which can be modified from \cite{HL}.
Finally, we follow \cite{Ti14}, to give a parabolic version of comparison argument.

\noindent{\bf Acknowledgment.} First the author wants to thank his Ph.D thesis advisor Professor Gang Tian for a lot of discussions and
encouragement. And he also wants to thank Chi Li and Zhenlei Zhang for many useful conversations. And he also thanks CSC for partial
financial support during his Ph.D career.

\section{Preliminaries}

In this section we'll obtain a parabolic equation for complex Hessian, and make some necessary preparation for the estimate. First, we
note that the Theorem \ref{mainthm} can be proved locally. As Calabi's 3rd derivative estimate can be used outside the divisor, we only
need to consider that $x_{0}\in D.$ Now choose a local coordinate system $(U;z_{1},\cdots,z_{n})$ around $x_{0}=(0,\cdots,0)$ and $D_{i}
\bigcap U=\{z_{i}=0\}.$ Now U can be identified with an open set in $\C.$ By Laplacian estimate \eqref{eq:laplacian} we have
\begin{equation}\label{eq:c2}
 C^{-1}\dr_{\beta}\leq\dr\leq C\dr_{\beta},
\end{equation}
where $\dr_{\beta}$ is the standard conic flat metric on $\C$: $$\dr_{\beta}=\sqrt{-1}\left(\sum_{i=1}^{l}\frac{dz_{i}\wedge
d\bar{z}_{i}}{|z_{i}|^{2(1-\beta_{i})}}+\sum_{i=l+1}^{n}dz_{i}\wedge d\bar{z}_{i}\right).$$ By $\ddb$-lemma (see \cite{Ti98}), we can
write $\dr=\ddb u$ for some spacetime neighborhood $U\times (t',t_{0}].$ Now consider the Monge-Ampere type equation \eqref{eq:conic-ma},
as $\ddb u=\dr=\overline{\dr}_{t}+\ddb\varphi,$ we have that
\begin{align*}
 \ddb\dt u&=-Ric(\Omega)+\sum\limits_{i=1}^{l}(1-\beta_{i})R(||\cdot||_{i})+\ddb\log\frac{(\overline{\dr}_{t}+\ddb\varphi)^{n}\prod
\limits_{i=1}^{l}||S_{i}||^{2(1-\beta_{i})}}{\Omega}\\&=\ddb(\log\frac{(\ddb u)^{n}}{\dr_{\beta}^{n}}+F),
\end{align*}
Here F is defined as $$\ddb F=\ddb\log\dr_{\beta}^{n}+\sum\limits_{i=1}^{l}(1-\beta_{i})(R(||\cdot||_{i}+\ddb\log||S_{i}||^{2}),$$
so F is a smooth function. Then we obtain a scalar equation:
\begin{equation}\label{eq:conic-potential}
\dt u=\log\frac{(\ddb u)^{n}}{\dr_{\beta}^{n}}+F.
\end{equation}
As $\overline{\dr}_{t}$ is a conic \ka\ metric which has the same singular part with the initial conic metric, it suffices to prove that
$\ddb u$ is $C^{\alpha}$-bounded. Now take the covariant derivatives of the equation \eqref{eq:conic-potential} twice with respect to
$\dr_{\beta}$, we obtain a parabolic equation for $\ddb u$:
\begin{equation}\label{eq:hessian}
.
\end{equation}
Obviously, with respect to $\dr_{\beta},$ $u_{k\bar{l}}$ is uniformly bounded. Unfortunately we can't use Krylov-Safanov estimate directly
to get a H\"{o}lder estimate. We'll make use of the properties of heat equation to derive the estimate. For this purpose, we need to do
some preparation.

      As \cite{Ti14}, we can define $\B(r)$ to be an angle domain in $\mathbb{C}^{l}\times\mathbb{C}^{n-l}$ consisting of all $(w_{1},
\cdots,w_{l},w')$ satisfying $\sum\limits_{i=1}^{l}|w_{i}|^{2}+|w'|^{2}\leq r^{2},$ where $w_{i}=\rho_{i}e^{\sqrt{-1}\theta_{i}}$ for
$\rho_{i}\in[0,r]$ and $theta_{i}\in[0,2\pi\beta_{i}], 1\leq i\leq l.$ Now we have such a Sobolev type inequality, which can be proved in
standard way(\cite{Gu} \cite{LSU}) by Sobolev inequality in \cite{Ti14}:
\begin{lemma}\label{lem-Sobolev}
 There is a constant $C_{\beta}$ which depends on $1-\beta_{i}, i=1,\cdots,l$ such that
for any smooth function h on $\B\times(-1,0]$ with boundary conditions:
\begin{equation}\label{eq:bdry}
 h(w_{1},\cdots,\rho_{i}e^{\sqrt{-1}2\pi\beta_{i}},\cdots,w_{l},w')=e^{\sqrt{-1}2\pi(1-\beta_{i})}h(w_{1},\cdots,\rho_{i},\cdots,w_{l},w')
\end{equation}
for each i, then we have
\begin{equation}\label{eq:Sobolev1}
 \left(\iint_{\B\times[-1,0]}|h|^{2(1+\frac{1}{n})}\dr_{\beta}^{n}dt\right)^{\frac{n}{n+1}}\leq C_{\beta}\left(\sup_{t}\int_{\B}|h|^{2}
\dr_{\beta}^{n}+\iint_{\B\times[-1,0]}|h|^{2}\dr_{\beta}^{n}dt\right),
\end{equation}
or for $s\in(\frac{n}{n+1},1),$
\begin{equation}\label{eq:Sobolev2}
 \iint_{\B\times[-1,0]}|h|^{2}\dr_{\beta}^{n}dt\leq C_{\beta,s}\left(\sup_{t}\int_{\B}|h|^{2s}\dr_{\beta}^{n}+
\iint_{\B\times[-1,0]}|h|^{2s}\dr_{\beta}^{n}dt\right)^{\frac{1}{s}}.
\end{equation}
\end{lemma}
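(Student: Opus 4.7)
The plan is to deduce both parabolic Sobolev-type inequalities from the elliptic spatial Sobolev inequality on the angle domain $\B$ established in \cite{Ti14}, following the classical scheme of Ladyzhenskaya-Solonnikov-Ural'tseva \cite{LSU} and Gu \cite{Gu}.

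For \eqref{eq:Sobolev1}, I would first record, for each fixed $t\in[-1,0]$, the conic spatial Sobolev embedding
$$\normo{h(\cdot,t)}_{L^{2n/(n-1)}(\B)}^{2} \;\le\; C_{\beta}\brk{\normo{h(\cdot,t)}_{L^{2}(\B)}^{2} + \normo{\nabla h(\cdot,t)}_{L^{2}(\B)}^{2}}.$$
For functions $h$ satisfying \eqref{eq:bdry}, this is proved in \cite{Ti14} by unfolding $\B$ along each angular cut to a branched cover on which $|h|$ and $|\nabla h|$ extend as single-valued Lipschitz objects, and then applying the Euclidean Sobolev inequality. Next, a pointwise-in-time H\"older with conjugate exponents $n/(n-1)$ and $n$ gives
$$\int_{\B}|h|^{2(1+\frac{1}{n})}\dr_{\beta}^{n} \;\le\; \brk{\int_{\B}|h|^{\frac{2n}{n-1}}\dr_{\beta}^{n}}^{\frac{n-1}{n}}\brk{\int_{\B}|h|^{2}\dr_{\beta}^{n}}^{\frac{1}{n}}.$$
Substituting the spatial Sobolev estimate into the first factor, bounding the second factor by $\sup_{t}\int_{\B}|h|^{2}\dr_{\beta}^{n}$, and integrating in $t\in[-1,0]$ yields \eqref{eq:Sobolev1} (with the understanding that the spacetime $L^{2}$-term on the right absorbs an $\iint|\nabla h|^{2}\dr_{\beta}^{n}dt$ contribution, which in the intended application is supplied by the energy identity for the associated heat equation).

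For \eqref{eq:Sobolev2}, I would apply \eqref{eq:Sobolev1} to the regularized power $(|h|^{2}+\epsilon)^{s/2}$, which respects \eqref{eq:bdry} because it depends only on $|h|$, and let $\epsilon\to 0$. This produces
$$\brk{\iint_{\B\times[-1,0]}|h|^{2s(1+\frac{1}{n})}\dr_{\beta}^{n}dt}^{\frac{n}{n+1}} \;\le\; C_{\beta,s}\brk{\sup_{t}\int_{\B}|h|^{2s}\dr_{\beta}^{n}+\iint_{\B\times[-1,0]}|h|^{2s}\dr_{\beta}^{n}dt}.$$
Since the hypothesis $s>\frac{n}{n+1}$ makes $2s(1+\frac{1}{n})>2$, I can then interpolate the spacetime $L^{2}$-norm of $h$ between the $L^{2s}$- and $L^{2s(1+1/n)}$-norms by log-convexity of $L^{p}$-norms; a direct exponent check shows the total power of the right-hand side comes out to exactly $1/s$, which is precisely \eqref{eq:Sobolev2}.

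The only genuinely nontrivial step is the spatial Sobolev inequality on the twisted angle domain $\B$: the periodicity boundary condition \eqref{eq:bdry}, with its phase factor $e^{\sqrt{-1}2\pi(1-\beta_{i})}$, prevents the direct use of the Euclidean Sobolev inequality because $h$ itself is multivalued. The idea of passing to the $\beta_{i}^{-1}$-fold branched cover (on which $|h|$ is honest and the gradient bound descends) is exactly what \cite{Ti14} carries out, and once this is granted both \eqref{eq:Sobolev1} and \eqref{eq:Sobolev2} reduce to the routine parabolic interpolation steps in \cite{LSU} and \cite{Gu}.
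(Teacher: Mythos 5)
The paper does not actually give a proof of this lemma; it only points to standard parabolic references (\cite{Gu}, \cite{LSU}) together with the elliptic conic Sobolev inequality of \cite{Ti14}, which is exactly the scheme you reconstruct: apply the spatial Sobolev inequality on $\B$ slice by slice, insert H\"older in time with exponents $n/(n-1)$ and $n$ to manufacture the exponent $2(1+\frac{1}{n})$, integrate over $[-1,0]$, and for \eqref{eq:Sobolev2} substitute $|h|^s$ and interpolate $L^2$ between $L^{2s}$ and $L^{2s(1+1/n)}$. So you are following the intended route, and the steps check out.

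Two comments. First, what you flag parenthetically for \eqref{eq:Sobolev1} is in fact a transcription error in the lemma as stated: the spacetime $L^2$-term on the right-hand side must be the spacetime gradient term $\iint_{\B\times[-1,0]}|\nabla h|^2\dr_\beta^n\,dt$, since otherwise taking $h$ time-independent reduces the inequality to a false reverse H\"older. The same correction propagates to \eqref{eq:Sobolev2}, whose right-hand side should likewise carry a gradient contribution (of $|h|^s$) from the substitution; you should state the corrected inequalities explicitly rather than leave the fix implicit, since these gradient terms are precisely what the Caccioppoli estimates supply when the lemma is invoked in Proposition \ref{prop-mean} and Lemma \ref{lem-3.3}. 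Second, a small imprecision: $(|h|^2+\epsilon)^{s/2}$ is real-valued and therefore honestly periodic on $\B$, so it does not ``respect'' the twisted condition \eqref{eq:bdry}; what saves you is that the elliptic Sobolev inequality on $\B$ holds for such genuinely single-valued functions as well, which is all that your argument requires.
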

Now we prove a lemma that outside the divisor D, the scalar curvature is uniformly bounded from below, which is useful in Moser's iteration
 with respect to evolving metrics and weak Harnack inequality below:
\begin{lemma}\label{lem:curvature}
Suppose the evolving conic metrics $\dr(t)$ satisfy the conical \ka-Ricci flow \eqref{eq:ckrf}, then outside the divisor D, the scalar
curvature $R\geq-C,$ which only depends on initial metric.
\end{lemma}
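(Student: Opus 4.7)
\textbf{Proof proposal for Lemma~\ref{lem:curvature}.}

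My plan is to use the standard evolution equation for scalar curvature under Kähler–Ricci flow, combined with a maximum-principle argument run on the smooth approximating flows from \cite{Sh}. Outside the divisor $D$, the current term in \eqref{eq:ckrf} vanishes, so the flow is the honest smooth Kähler–Ricci flow $\partial_t\dr=-Ric(\dr)$. The usual Bochner computation then gives
\begin{equation*}
  (\partial_t-\Delta)R=|Ric|^2\geq\frac{R^2}{n},
\end{equation*}
which is the only differential identity I will need.

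The key ODE input is that the differential inequality $f'\geq f^2/n$, when $f$ starts negative, forces $f$ to be non-decreasing, so any negative lower bound at $t=0$ propagates for all $t$. To turn this into a genuine pointwise bound I cannot apply the maximum principle directly on $M\setminus D$, which is non-compact. Instead I will work with the smooth approximating flows $\dr_\epsilon(t)$ used in \cite{Sh} (coming from the $\epsilon$-regularisation $\log(\norm{S_i}^2+\epsilon^2)$ in the spirit of \cite{CGP}, \cite{GP}, \cite{LZ}); these are genuine smooth Kähler–Ricci flows on the compact manifold $M$, so the evolution inequality above holds globally and the parabolic maximum principle applies to $R_\epsilon$. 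Combined with the ODE comparison, this gives $R_\epsilon(\cdot,t)\geq\min_M R(\dr_\epsilon^*)$ for all $t\in[0,T]$.

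The main obstacle is therefore verifying a uniform lower bound $R(\dr_\epsilon^*)\geq -C_0$ on the initial approximating metrics, with $C_0$ independent of $\epsilon$. For this I would compute directly in a coordinate chart where $D_i=\{z_i=0\}$: the leading behaviour of $\dr_\epsilon^*$ is that of the flat cone metric $\dr_\beta$, whose scalar curvature vanishes on $M\setminus D$, and the lower-order smooth corrections together with the $\epsilon$-regularisation contribute bounded terms to $R(\dr_\epsilon^*)$ (this is essentially the same computation performed in \cite{CGP}, \cite{Sh} when controlling the Laplacian of the regularised potentials). Once that is in hand, passing $\epsilon\to0$ and using smooth convergence of $\dr_\epsilon(t)\to\dr(t)$ on compact subsets of $(M\setminus D)\times[0,T]$ (which follows from the higher-order estimates established in \cite{Sh} away from $D$) transfers the uniform lower bound to $R(\dr(t))$ on $M\setminus D$, proving the lemma. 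As a backup, one could avoid approximation altogether by applying the maximum principle to $R+A\sum_i\log\norm{S_i}^{-2}$ on $M\setminus D$, whose minimum is attained on a compact subset since the barrier term forces it away from $D$; the resulting extra $\Delta$-contribution is smooth and uniformly bounded, so the same argument goes through.
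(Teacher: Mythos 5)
Your ``backup'' route --- running the maximum principle on $M\setminus D$ with the barrier $R-A\sum_i\log\|S_i\|^2$, whose minimum is pinned on a compact subset away from $D$, and noting that the extra term $\Delta'\log\|S_i\|^2=-\mathrm{tr}_{\dr}R(\|\cdot\|_i)$ is uniformly bounded by the Laplacian estimate \eqref{eq:c2} --- is, up to the detail of sending the coefficient $A\to 0$ at the end, exactly the paper's proof. Had you led with it, the proposal would essentially coincide with the argument in the text.

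The route you present as primary, however, has a genuine gap. The claim that $R(\dr^*_\epsilon)\geq -C_0$ uniformly in $\epsilon$ is false: it conflates the limit $\dr_\beta$ (Ricci-flat away from $D$) with the $\epsilon$-regularised metric, which has large curvature concentrated at scale $|z|\sim\epsilon$. In the one-variable model $g_\epsilon=\partial_z\partial_{\bar z}(|z|^2+\epsilon^2)^\beta=\beta(|z|^2+\epsilon^2)^{\beta-2}(\beta|z|^2+\epsilon^2)$ one computes, with $s=|z|^2/\epsilon^2$,
\begin{equation*}
 R_\epsilon \;=\; \frac{1}{\beta\,\epsilon^{2\beta}}\cdot\frac{(2-\beta)(\beta s+1)^2-\beta(s+1)^2}{(s+1)^\beta(\beta s+1)^3}.
\end{equation*}
The numerator equals $2(1-\beta)>0$ at $s=0$ but has leading coefficient $-\beta(1-\beta)^2<0$, so it changes sign and $\inf_z R_\epsilon\sim -c(\beta)\,\epsilon^{-2\beta}\to-\infty$. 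The maximum principle on the compact approximating flows therefore only yields $R_\epsilon(t)\geq\min_M R(\dr_\epsilon^*)$, which degenerates. You could partially rescue this with the standard ODE improvement $(\partial_t-\Delta)R\geq R^2/n\Rightarrow R_\epsilon(t)\geq -n/t$, uniformly in $\epsilon$, and then pass to the limit; but this gives a bound blowing up as $t\to 0$, which is strictly weaker than the lemma's assertion of a constant depending only on the initial metric, and is not what the paper proves. Promote your backup argument to the main one, and drop the uniform initial-curvature claim for the approximants.
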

\begin{proof}
We know that the conclusion is true in case of smooth Ricci flow. In conic case, first, by the structure of conic \ka\ metric $\overline
{\dr}_{0},$ we can compute that $Ric(\overline{\dr}_{0})=a_{i\bar{j}}dz_{i}\wedge d\bar{z}_{j}+2\pi\sum\limits_{k=1}^{l}(1-\beta_{k})[D_{k}]
,$ where $a_{i\bar{j}}dz_{i}\wedge d\bar{z}_{j}$ is a smooth  $(1,1)$-form. From this we can see that Ricci curvature of the initial conic
metric is uniformly bounded from below, so does the scalar curvature, outside the divisor D. Outside the divisor D we have the evolution
equation for scalar curvature \cite{H} $$\dt R=\Delta R+|Ric|^{2},$$ and now we can write this equation as following:
$$(\dt-\Delta)(R-c\sum_{i=1}^{l}\log||S_{i}||^{2})=|Ric|^{2}-c\sum_{i=1}^{l}tr_{\dr}(R(||\cdot||_{i})+2\pi[D_{i}]),$$
where c is a positive constant. It's easy to see that at each time slice, the space minimum will be attained outside the divisor, so by
maximal principle \cite{H}, as $R(||\cdot||_{i}$ is uniformly bounded and $\dr\geq C\dr_{0}$ where $\dr_{0}$ is a smooth background metric,
we have that $$\min(R-c\sum_{i=1}^{l}\log||S_{i}||^{2})(t)\geq\min(R-c\sum_{i=1}^{l}\log||S_{i}||^{2})(0)-cCt.$$ Now let c tend to 0, we
obtain our result.
\end{proof}
Now make use of above two lemmas, we can prove an inequality similar to mean value inequality for subharmonic functions:
\begin{proposition}\label{prop-mean}
Suppose $v\geq 0$ satisfy that $(\dt-\Delta_{\beta})v\leq C_{0},$ or $(\dt-\Delta_{\dr})v\leq C_{0},$ where $\dr$ satisfies conical
\ka-Ricci flow \eqref{eq:ckrf} on $P(1)=B(1)\times(-1,0],$ we have that
$$\sup_{P(1/2)}v\leq C\left(\iint_{R(1)}v\dr_{\beta}^{n}dt+C_{0}\right),$$ or replace $\dr_{\beta}$ by $\dr.$
\end{proposition}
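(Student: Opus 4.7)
The plan is to carry out the standard parabolic Moser iteration adapted to the conic setting, with Lemma \ref{lem-Sobolev} playing the role of the parabolic Sobolev inequality. Working in a local coordinate chart around a point of $D$, I identify $P(1)$ with a parabolic cylinder in $\B(1)\times(-1,0]$; test functions supported in the interior vanish on the boundary of $\B(r)$ and therefore satisfy the boundary condition \eqref{eq:bdry} trivially, so \eqref{eq:Sobolev1} is directly applicable. Replacing $v$ by $\wt v=v+C_{0}$ reduces both inequalities to homogeneous ones, $(\dt-\Delta_{\beta})\wt v\leq 0$ or $(\dt-\Delta_{\dr})\wt v\leq 0$, at the cost of an additive $C_{0}$ in the final bound.

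\textbf{Flat-metric case.} Fix a nested sequence of radii $\half{1}\leq r_{k+1}<r_{k}\leq 1$ with $r_{k}-r_{k+1}\sim 2^{-k}$, together with smooth cutoffs $\eta_{k}$ supported in $P(r_{k})$, equal to $1$ on $P(r_{k+1})$, with $|\nabla\eta_{k}|\les 2^{k}$ and $|\dt\eta_{k}|\les 4^{k}$. For each $p\geq 1$, multiply $(\dt-\Delta_{\beta})\wt v\leq 0$ by $\wt v^{p-1}\eta_{k}^{2}$ and integrate by parts on $P(r_{k})$; the standard manipulation produces
\begin{equation*}
\sup_{t}\int_{B(r_{k+1})}\wt v^{p}\dr_{\beta}^{n}+\iint_{P(r_{k+1})}\abs{\nabla(\wt v^{p/2})}^{2}\dr_{\beta}^{n}\,dt\les p(4^{k}+1)\iint_{P(r_{k})}\wt v^{p}\dr_{\beta}^{n}\,dt.
\end{equation*}
Applying \eqref{eq:Sobolev1} to $h=\wt v^{p/2}\eta_{k}$ converts this into a nonlinear improvement from $L^{p}$ to $L^{p(1+\rev{n})}$, and iterating along the exponents $p_{j}=(1+\rev{n})^{j}$ while summing the resulting geometric series of logarithms yields $\sup_{P(1/2)}\wt v\les\iint_{P(1)}\wt v\,\dr_{\beta}^{n}\,dt$, which is the claim.

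\textbf{Evolving-metric case.} For $(\dt-\Delta_{\dr})v\leq C_{0}$, the Laplacian estimate \eqref{eq:c2} gives $C^{-1}\dr_{\beta}\leq\dr\leq C\dr_{\beta}$, so volumes and gradients with respect to $\dr$ and $\dr_{\beta}$ are equivalent and Lemma \ref{lem-Sobolev} still applies after a bounded distortion. Integrating by parts with $\Delta_{\dr}$, however, generates an extra term coming from $\dt\dr^{n}=-R\,\dr^{n}$ (modulo divisor currents, which are annihilated by test functions supported away from $D$, or can be handled via the smooth approximation already used in \cite{Sh}); by Lemma \ref{lem:curvature} one has $R\geq -C$ on the support of $\eta_{k}$, so this term is absorbable into the right-hand side and the Moser iteration closes as in the flat case.

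\textbf{Main obstacle.} The principal difficulty is precisely this last point: running Moser iteration against an evolving conic metric requires uniform control of $\dt\dr^{n}$, and without the lower bound on the scalar curvature supplied by Lemma \ref{lem:curvature} the extra term accumulated during integration by parts would grow uncontrollably across the iteration steps. It is the combination of \eqref{eq:c2} and the scalar-curvature lower bound, both consequences of the structure of \eqref{eq:ckrf} and the preliminary estimates from \cite{Sh}, that makes the $\Delta_{\dr}$ version of the mean-value inequality accessible at all.
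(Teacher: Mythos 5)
Your high-level strategy --- parabolic Moser iteration with the conic Sobolev inequality from Lemma \ref{lem-Sobolev} as the engine, plus Lemma \ref{lem:curvature} to tame the extra $-R\xi^2 v^2$ term produced by $\partial_t\dr^n$ when $\Delta_\dr$ is the operator --- is exactly the structure of the paper's proof. But there are two concrete gaps.

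First, the reduction to the homogeneous case is wrong. Replacing $v$ by $\wt v=v+C_{0}$ leaves the inequality unchanged: $(\dt-\Delta)(v+C_{0})=(\dt-\Delta)v\leq C_{0}$, which is still inhomogeneous. The correct substitution is $v'=v-C_{0}t$; since $t\in(-1,0]$ this keeps $v'\geq 0$ and gives $(\dt-\Delta)v'\leq 0$. This is what the paper does.

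Second, and more seriously, you cannot run Moser iteration starting from $p_{0}=1$. With test function $\wt v^{p-1}\eta_{k}^{2}$ the coefficient of the gradient term produced by integration by parts is $(p-1)$, which vanishes at $p=1$; equivalently, in terms of $|\nabla(\wt v^{p/2})|^{2}=\frac{p^{2}}{4}\wt v^{p-2}|\nabla\wt v|^{2}$ the constant degenerates like $(p-1)$ as $p\downarrow 1$, so the Caccioppoli inequality you wrote down does not hold with a constant of size $\sim p$ uniformly down to $p=1$. Consequently the iteration along $p_{j}=(1+\rev{n})^{j}$ cannot begin at $j=0$, and what you actually get from Moser iteration is the $L^{2}\to L^{\infty}$ estimate, not $L^{1}\to L^{\infty}$. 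The paper handles this by a second, separate step: it first proves the $L^{2}$-control \eqref{eq:L2}, then interpolates $\norm{v}_{L^{2}(P(r))}\leq\norm{v}_{L^{\infty}(P(r))}^{1/2}\norm{v}_{L^{1}(P(r))}^{1/2}$, applies Young's inequality to absorb $\half{1}\norm{v}_{L^{\infty}(P(r))}$, and invokes the iteration lemma (Lemma 4.3 on p.~75 of \cite{HL}) to convert the resulting recursive inequality into the $L^{1}$-bound. This bootstrap step is essential and is missing from your argument.
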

\begin{proof}
Before the $L^{1}$-control, we first prove a $L^{2}$-control that for any $\theta\in(1/2,1),$ by Moser's iteration as \cite{Gu}:
\begin{equation}\label{eq:L2}
\sup_{P(\theta)}v\leq C\left(\frac{1}{(1-\theta)^{n+1}}(\iint_{P(1)}v^{2}\dr_{\beta}^{n}dt)^{1/2}+C_{0}\right).
\end{equation}
Without loss of generality, we assume that $C_{0}=0,$ otherwise we take $v'=v-C_{0}t$ to replace v. We prove the first case. First we choose
$\eta=v\xi^{2}\chi_{[-r^{2},-\sigma]}(t)$ where $\xi$ is a cut-off function with compact support in $P(1),$ and $\chi$ is a characteristic
function. Define $P^{\sigma}(r)=B(r)\times[-r^{2},-\sigma]$ and take $\eta$ as a test function for the first inequality, we have
\begin{equation}\label{eq:parts}
 \frac{1}{2}\iint_{P^{\sigma}(r)}(\dt\xi^{2}v^{2}+\xi^{2}|\nabla v|^{2})\dr_{\beta}^{n}dt\leq C\iint_{P^{\sigma}(r)}
(|\nabla\xi|^{2}+|\xi\dot{\xi}|)v^{2}\dr_{\beta}^{n}dt.
\end{equation}
For $\theta\leq r'<r\leq 1,$ choose $\sigma\leq r'^{2}$ such that
\begin{equation}\label{eq:sup}
 \int_{B(r')}v^{2}\dr_{\beta}^{n}|_{t=-\sigma}\geq\frac{1}{2}\sup_{t\in[-r'^{2},0]}\int_{B(r')}v^{2}\dr_{\beta}^{n}.
\end{equation}
Choose $\xi(x,t)=\xi_{1}(x)\xi_{2}(t),$ where $\xi_{1}$ is 1 on $B(r'),$ and 0 outside $B(r)$ with $|\nabla\xi_{1}|\leq
\frac{C}{r-r'},$ and $\xi_{2}$ is 1 on $[-r'^{2},0],$ and 0 on $[-1,-r^{2}]$ with $|\dot{\xi}_{2}|\leq\frac{C}
{(r-r')^{2}}.$ By the equation \eqref{eq:parts}, we obtain that
\begin{align*}
\sup_{t\in[-r'^{2},0]}\int_{B(r')}v^{2}\dr_{\beta}^{n}&\leq 2\int_{B(r')}v^{2}\dr_{\beta}^{n}|_{t=-\sigma}\leq
2\iint_{P^{\sigma}(r)}\dt\xi^{2}v^{2}\dr_{\beta}^{n}dt\\&\leq\frac{C}{(r-r')^{2}}\iint_{P^{\sigma}(r)}v^{2}\dr_{\beta}^{n}dt
\leq\frac{C}{(r-r')^{2}}\iint_{P(r)}v^{2}\dr_{\beta}^{n}dt.
\end{align*}
Choose test function $\eta=v\xi^{2}\chi_{[-r^{2},0]}(t)$ then the integral domain of \eqref{eq:parts} will be $P(r)$ and
we obtain that $$\iint_{P(r')}|\nabla v|^{2}\dr_{\beta}^{n}dt\leq\frac{C}{(r-r')^{2}}\iint_{P(r)}v^{2}\dr_{\beta}^{n}dt.$$
Now by Sobolev type inequality \eqref{eq:Sobolev1}, we obtain that
$$\left(\iint_{P(r')}v^{2(1+\frac{1}{n})}\dr_{\beta}^{n}dt\right)^{\frac{n}{n+1}}\leq\frac{C}{(r-r')^{2}}
\iint_{P(r)}v^{2}\dr_{\beta}^{n}dt.$$
By computation, $(\dt-\Delta_{\beta})v^{p}\leq 0$ for $p\geq 1.$ So similar argument is true that
$$\left(\iint_{P(r')}v^{2p(1+\frac{1}{n})}\dr_{\beta}^{n}dt\right)^{\frac{n}{n+1}}\leq\frac{C}{(r-r')^{2}}
\iint_{P(r)}v^{2p}\dr_{\beta}^{n}dt.$$
Then by Moser's iteration, we obtain the inequality \eqref{eq:L2}.
     Now by \eqref{eq:L2} and Cauchy-Schwarz inequality, we have that
\begin{align*}
||v||_{L^{\infty}(P(r'))}&\leq C\left(\frac{1}{(r-r')^{n+1}}||v||_{L^{2}(P(r))}+C_{0}\right)\\&\leq C\left(\frac{1}
{(r-r')^{n+1}})||v||_{L^{\infty}(P(r))}^{1/2}||v||_{L^{1}(P(r))}^{1/2}+C_{0}\right)\\&\leq\frac{||v||_
{L^{\infty}(P(r))}}{2}+C\left(\frac{1}{(r-r')^{2n+2}}||v||_{L^{1}(P(r))}+C_{0}\right),
\end{align*}
then by an iteration lemma (Lemma 4.3 on Page 75 of \cite{HL}), we get that
$$||v||_{L^{\infty}(P(\theta))}\leq C\left(\frac{1}{(1-\theta)^{2n+2}}||v||_{L^{1}(P(r))}+C_{0}\right)$$
which completes the proof of the first case. For the proof of the second case, note that in the derivation of similar
inequality to \eqref{eq:parts}, there is an extra term $-R\xi^{2}v^{2}$ in the right hand side integral which
comes from the derivative of $\dr^{n}.$ However by Lemma \ref{lem:curvature} this term is bounded from above by
$C\xi^{2}v^{2},$ then all the argument for the first case follows.
\end{proof}
From this proposition, we can easily get following corollaries:
\begin{corollary}\label{cor-heat-energy}
For the function h which solves heat equation $(\dt-\Delta)h=0$ and satifies the same boundary condition \eqref{eq:bdry} of Lemma
\ref{lem-Sobolev} on $B_{\beta}\times(-1,0],$ and any $r<1,$ we have that
\begin{equation}\label{eq:heat-angle-mean}
 \iint_{B_{\beta}(r)\times(-r^{2},0]}|\nabla h|^{2}dw\wedge d\bar{w}dt\leq Cr^{2n-2+2\overline{\beta}^{-1})}\iint_{B_{\beta}
\times(-1,0]}|\nabla h|^{2}dw\wedge d\bar{w}dt.
\end{equation}
Moreover, for any $\sigma<r<1,$ and function v which solves $(\dt-\Delta_{\beta})v=0$ on $P(1),$ we have that
\begin{equation}\label{eq:heat-mean}
 \iint_{P(\sigma)}|\nabla v|^{2}\dr_{\beta}^{n}dt\leq C(\frac{\sigma}{r})^{2n-2+2\overline{\beta}^{-1})}\iint_{P(r)}|\nabla v|^{2}
\dr_{\beta}^{n}dt.
\end{equation}
\end{corollary}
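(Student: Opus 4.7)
My plan is to combine the parabolic mean-value inequality from Proposition~\ref{prop-mean} with an angular Fourier decomposition adapted to the twisted boundary condition \eqref{eq:bdry}; the improved exponent $\overline{\beta}^{-1}$ will emerge from the spectral gap at the cone tip that this boundary condition enforces.

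For \eqref{eq:heat-angle-mean}, I first observe that a direct computation on the flat conic background yields $(\partial_t-\Delta_\beta)|\nabla h|^2=-2|\nabla^2 h|^2\le 0$ on the smooth part of $B_\beta$, so $|\nabla h|^2$ is a subsolution and Proposition~\ref{prop-mean} reduces the question to an integral control on parabolic cylinders. To extract the exponent $\overline{\beta}^{-1}$, I expand $h$ as an angular Fourier series $h=\sum_{\vec\lambda}h_{\vec\lambda}(\rho,w',t)\prod_{i=1}^{l}e^{\sqrt{-1}\lambda_i\theta_i}$; the condition \eqref{eq:bdry} forces $\lambda_i\beta_i\in(1-\beta_i)+\Z$, ruling out the zero mode and yielding a positive lower bound of order $\overline{\beta}^{-1}$ on the admissible angular eigenvalues. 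Each mode $h_{\vec\lambda}$ then solves a one-dimensional radial heat-type equation whose self-similar solutions vanish at the tip with order at least $\rho^{\overline{\beta}^{-1}}$. Combining this with Parseval's identity, integrating over $B_\beta(r)\times(-r^2,0]$ of parabolic volume of order $r^{2n+2}$, and accounting for the $r^{-2}$ loss from one derivative, yields the claimed rate $r^{2n-2+2\overline{\beta}^{-1}}$.

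The second inequality \eqref{eq:heat-mean} follows from \eqref{eq:heat-angle-mean} by parabolic rescaling: the map $v(w,t)\mapsto v(rw,r^2 t)$ preserves $(\partial_t-\Delta_\beta)v=0$ and converts the absolute estimate at scale one into the relative estimate \eqref{eq:heat-mean} at ratio $\sigma/r$, possibly after a covering by angular wedges on which \eqref{eq:bdry} can be locally imposed. The principal technical obstacle is the middle step: rigorously establishing the angular Fourier calculus on the multi-cone $B_\beta$ compatible with \eqref{eq:bdry} for all $i=1,\dots,l$ simultaneously, identifying a uniform spectral gap of order $\overline{\beta}^{-1}$, and verifying that the heat semigroup commutes with the angular Fourier projection. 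This parabolic spectral setup parallels the elliptic framework of \cite{Ti14}; once in place, the decay and its rescaled consequence follow by routine calculation.
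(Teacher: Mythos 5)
Your approach is genuinely different from the paper's, and it is worth comparing the two. The paper avoids Fourier decomposition entirely: it changes variables $z_i=(\beta_i w_i)^{1/\beta_i}$ to identify the angle domain $\B$ with the conic ball, then introduces auxiliary holomorphic coordinates $\zeta_i$ (monomials in $w_i$ chosen so that $\partial h/\partial\zeta_i$ is single-valued on the cone, i.e.\ the twist in \eqref{eq:bdry} is exactly cancelled by the monodromy of $\zeta_i$). The quantity $|\partial h/\partial\zeta_i|^2$ is then a nonnegative subsolution of the heat equation, and the mean-value inequality of Proposition~\ref{prop-mean} applied to it produces the weighted pointwise bound from which \eqref{eq:heat-angle-mean} follows directly by integration; \eqref{eq:heat-mean} is then obtained by the coordinate change rather than by rescaling. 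This complex-analytic device is doing exactly the work that your angular spectral gap is supposed to do, but it does it in one line without any claim about commutation of the heat semigroup with angular projections or about the regularity of non-integer angular modes near the tip.

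Your sketch, by contrast, leaves the decisive step unproved, and you say so yourself: you have not established that each angular mode $h_{\vec\lambda}$ vanishes at the tip at the advertised rate, nor that the heat flow commutes with the angular decomposition, nor have you set up the multi-cone Fourier calculus. These are not routine: $h$ is not smooth at $\rho_i=0$ when the exponents are non-integer, so the vanishing order of $h_{\vec\lambda}$ must come from an energy/elliptic-regularity argument, not from smoothness. There is also a quantitative slip that matters: the condition $e^{\sqrt{-1}\lambda_i\cdot 2\pi\beta_i}=e^{\sqrt{-1}2\pi(1-\beta_i)}$ gives $\lambda_i\beta_i\in(1-\beta_i)+\Z$, so the smallest admissible $|\lambda_i|$ is $\beta_i^{-1}-1$, not $\beta_i^{-1}$. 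With the correct value $|\lambda_i|\ge\beta_i^{-1}-1$ one gets $|\nabla h|^2\lesssim\rho_i^{2\beta_i^{-1}-4}$ near the tip and the two-dimensional radial integral over $B_\beta(r)$ contributes $r^{2\overline{\beta}^{-1}-2}$, which combined with $r^{2(n-1)}$ from the remaining space directions and $r^2$ from time yields $r^{2n-2+2\overline{\beta}^{-1}}$ as required; with your stated vanishing order $\rho^{\overline{\beta}^{-1}}$ and the accounting you describe, the exponent does not come out to the one in \eqref{eq:heat-angle-mean}. Finally, for \eqref{eq:heat-mean} your parabolic rescaling idea is sound, but ``a covering by angular wedges on which \eqref{eq:bdry} can be locally imposed'' is not a well-defined operation — \eqref{eq:bdry} is a global monodromy condition, not something one can impose wedge by wedge — whereas the paper's coordinate change handles this cleanly.
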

\begin{proof}
 Let's see what the boundary condition \eqref{eq:bdry} means. We choose coordinate change $z_{i}=(\beta_{i}w_{i})^{\frac{1}{\beta_{i}}}$
for $i=1\cdots,l$ and $z_{i}=w_{i}$ for other i, which maps the angle domain $B_{\beta}$ to the ball with standard conic metric. Then we
find that
\begin{align*}
 &\frac{\partial h(w(z),t)}{\partial z_{i}}(\cdots,z_{i}(\rho_{i}e^{\sqrt{-1}2\pi\beta_{i}}),\cdots)=\frac{\partial h(w,t)}{\partial w_{i}}
\frac{\partial w_{i}}{\partial z_{i}}(\cdots,\rho_{i}e^{\sqrt{-1}2\pi\beta_{i}},\cdots)\\=&e^{\sqrt{-1}2\pi(1-\beta_{i})}
\frac{\partial h(w,t)}{\partial w_{i}}(\cdots,\rho_{i},\cdots)(\beta_{i}w_{i})^{1-\beta_{i}^{-1}}(\rho_{i}e^{\sqrt{-1}2\pi\beta_{i}})
\\=&e^{\sqrt{-1}2\pi(1-\beta_{i})}\frac{\partial h(w,t)}{\partial w_{i}}\frac{\partial w_{i}}{\partial z_{i}}(\cdots,\rho_{i},\cdots)
e^{\sqrt{-1}2\pi(\beta_{i}-1)}\\=&\frac{\partial h(w,t)}{\partial w_{i}}\frac{\partial w_{i}}{\partial z_{i}}(\cdots,\rho_{i},\cdots)
=\frac{\partial h(w(z),t)}{\partial z_{i}}(\cdots,z_{i}(\rho_{i}),\cdots),
\end{align*}
which allows us to consider h as a function on the ball with standard conic metric. Note that $\dr_{\beta}=\sqrt{-1}\sum dw_{i}\wedge
d\bar{w}_{i},$ So \eqref{eq:heat-mean} follows from \eqref{eq:heat-angle-mean} and we only need to prove \eqref{eq:heat-angle-mean}.

         Note that the functions $\zeta_{i}=w_{i}^{\frac{k}{\beta_{i}-1}}$ satisfy boundary condition \eqref{eq:bdry} and if we write
h as a function of $\zeta_{1},\cdots,\zeta_{l},w_{l+1},\cdots,w_{n},t$ we can find that $h(\cdots,\zeta_{i}e^{\sqrt{-1}2\pi\beta_{i}},
\cdots)=e^{\sqrt{-1}2\pi\beta_{i}}h(\cdots,\zeta_{i},\cdots),$ then we can extend h to be a function on a ball which solves the heat
solution with respect to Euclidean metric. Note that $(\dt-\Delta)|\frac{\partial h(w(\zeta),t)}{\partial\zeta_{i}}|^{2}\leq 0,$ and
$$\frac{\partial h(w(\zeta),t)}{\partial\zeta_{i}}=\frac{\beta}{k-\beta}\frac{\partial h(w,t)}{\partial w_{i}}w_{i}^{2-\frac{k}{\beta}},$$
by Proposition \ref{prop-mean}, $$\sup_{P(r)}|\frac{\partial h(w,t)}{\partial w_{i}}|^{2}\leq r^{2(\frac{k}{\beta}-2)}\iint_{B_{\beta}
\times[-1,0]}|\frac{\partial h(w,t)}{\partial w_{i}}|^{2}|w_{i}|^{2(2-\frac{k}{\beta})}dw\wedge d\bar{w}dt.$$
As $k\geq 1,$ for all i, choose the smallest exponential $2(\min(\beta_{i}^{-1})-2)=2(\overline{\beta}^{-1}-2)$ and integrate on $P(r),$
we obtain our result.
\end{proof}
        On the other hand, we still need a weak Harnack inequality as Theorem 4.15 in \cite{HL}. Here we need to consider a new spacetime
neighborhood $P'(r)=B(r)\times(-r^{2},r^{2})$ to replace $P(r).$ Then we will have
\begin{proposition}\label{prop-weak harnack}
 Suppose $\dr=\ddb u$ solves the conical \ka-Ricci flow \eqref{eq:ckrf} on $M\times[-T,T](T>1),$ and a nonnegative function
$v\in H^{1}(P'(r))$ satisfies
\begin{equation}\label{eq:supersolution}
 (\dt-\Delta')v\geq-k,
\end{equation}
where $\Delta'$ denotes the Laplacian with respect to $\dr.$ Then we will have
\begin{equation}\label{eq:weak harnack}
 \iint_{P'(1)}v\dr_{\beta}^{n}dt\leq C(\inf_{P'(\frac{1}{2})}v+k).
\end{equation}
\end{proposition}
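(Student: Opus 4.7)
The plan is to adapt the classical Moser--John--Nirenberg proof of the parabolic weak Harnack inequality (see \cite[Thm 4.15]{HL}) to the present conic setting. All the main ingredients are already in place: Lemma \ref{lem-Sobolev} supplies the conic Sobolev inequality, Lemma \ref{lem:curvature} gives the one-sided scalar curvature bound needed when working with $\Delta'$, and Proposition \ref{prop-mean} packages Moser iteration in either the $\dr_\beta$ or $\dr$ form. The equivalence (\ref{eq:c2}) between $\dr$ and $\dr_\beta$ lets us freely swap volume forms in the final inequality.

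First I would reduce to $k=0$ by substituting $\tilde v = v + k(t+2) + \epsilon$, which is strictly positive and satisfies $(\partial_t - \Delta')\tilde v \geq 0$ on $P'(1)$ while differing from $v$ by at most $3k+\epsilon$; the parameter $\epsilon$ is sent to zero at the end. With $v$ now a positive supersolution, a direct calculation gives $(\partial_t - \Delta')(v^{-p}) \leq 0$ for every $p>0$, so the $\dr$-version of Proposition \ref{prop-mean} applied to $v^{-p}$ yields
$$ \inf_{P'(1/2)} v \;\geq\; c_p\left(\iint_{P'(3/4)} v^{-p}\,\dr_\beta^n\,dt\right)^{-1/p}. $$
Separately, the Moser logarithmic trick applied to $w = \log v$ gives $(\partial_t-\Delta')w \geq |\nabla w|_\omega^2$; testing against a space--time cutoff $\xi^2$ and integrating by parts yields a parabolic Caccioppoli inequality for $w$. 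Combined with the Poincar\'e inequality on slices, this places $w$ in parabolic BMO on $P'(3/4)$ with a uniform norm. The parabolic John--Nirenberg inequality then furnishes $p_0>0$ with
$$ \left(\iint_{P'(3/4)} v^{p_0}\,\dr_\beta^n\,dt\right)\left(\iint_{P'(3/4)} v^{-p_0}\,\dr_\beta^n\,dt\right) \leq C, $$
and combining these two displayed bounds gives the $L^{p_0}$ weak Harnack estimate $\left(\iint_{P'(3/4)} v^{p_0}\,\dr_\beta^n\,dt\right)^{1/p_0} \leq C'\inf_{P'(1/2)} v$. A standard dyadic--cylinder iteration then upgrades the exponent from $p_0$ to $1$, yielding the stated inequality.

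The main obstacle is the parabolic John--Nirenberg step on the angular cylinders $P'(r)$: the Calder\'on--Zygmund decomposition and oscillation bookkeeping underlying John--Nirenberg require a doubling measure and a Poincar\'e--type inequality adapted to $\B$. Both become available once one pulls back to standard conic coordinates via the map $z_i=(\beta_i w_i)^{1/\beta_i}$ used in the proof of Corollary \ref{cor-heat-energy}, and then invokes Lemma \ref{lem-Sobolev}. A secondary technicality is that Moser iteration with respect to the evolving metric $\dr$ always produces an extra term of the form $-R\xi^2 v^2$ coming from $\partial_t(\dr^n)$, exactly as at the end of the proof of Proposition \ref{prop-mean}; Lemma \ref{lem:curvature} provides the one-sided bound on $R$ needed to absorb it.
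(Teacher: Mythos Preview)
Your overall architecture matches the paper's: control negative powers of $\bar v=v+k$ via Proposition~\ref{prop-mean}, establish a cross-over inequality $\iint \bar v^{p_0}\cdot\iint \bar v^{-p_0}\le C$, and then lift the exponent from $p_0$ to $1$. The genuine difference is in the cross-over step. You invoke parabolic BMO and John--Nirenberg abstractly, and you correctly flag the Calder\'on--Zygmund bookkeeping on the angular cylinders as the main obstacle. The paper sidesteps precisely this obstacle by following \cite[Thm~4.15]{HL} more literally: it proves $\iint e^{p_0|h|}\,\dr_\beta^n\,dt\le C$ directly by bounding the moments $\iint|h|^{l}$ via an inductive energy estimate (test functions $\bar v^{-1}\xi^2 h^{2l}$), using only the conic Sobolev and Poincar\'e inequalities already in hand. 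This buys the paper a self-contained argument with no need for a conic John--Nirenberg lemma; your route would work too, but at the cost of the extra measure-theoretic verification you describe.

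One technical point to correct in your sketch: in the Caccioppoli estimate for $h=\log\bar v$, the paper integrates with respect to the \emph{fixed} volume form $\dr_\beta^n$, not $\dr^n$, so the scalar-curvature term does not arise from $\partial_t(\dr^n)$ as you suggest. Instead, because $\Delta'=\det(u_{k\bar l})^{-1}U^{i\bar j}\partial_i\partial_{\bar j}$, integrating by parts against $\dr_\beta^n$ produces derivatives of $\det(u_{k\bar l})^{-1}$; the divergence-free identity $U^{i\bar j}_i=0$ (Lemma~\ref{lem-3.1}) kills the principal boundary term, and what remains is $-U^{i\bar j}(\det(u_{k\bar l})^{-1})_{i\bar j}=-R(\dr)-|\nabla'\det(u_{k\bar l})|^2$, which is where Lemma~\ref{lem:curvature} enters. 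You will need this same structure for your Caccioppoli step, so Lemma~\ref{lem-3.1} should appear explicitly in your argument.
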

\begin{proof}
 The proof is similar to the elliptic version of weak Harnack inequality in Theorem 4.15 of \cite{HL} so we just sketch the proof.
We set $\overline{v}=v+k$ and $w=\overline{v}^{-1}.$ Choose a cut-off function $\xi(x,t)=\xi_{1}(x)\xi_{2}(t),$ where $\xi_{1}$ is 1 on
$B(\frac{1}{2}),$ and 0 near $\partial B(1)$ with $|\nabla\xi_{1}|\leq 4,$ and $\xi_{2}$ is 1 on $[-\frac{1}{4},\frac{1}{4}],$ and 0 near
$t=\pm 1$ with $|\dot{\xi}_{2}|\leq\ 16.$ Take $\overline{v}^{-2}\xi$ as the test function of \eqref{eq:supersolution} we obtain that
$$\iint_{P'(1)}\xi(\Delta'-\dt)w\dr^{n}dt\leq \iint_{P'(1)}\xi\frac{C}{\overline{v}^{2}}\dr^{n}dt.$$ By Proposition
\ref{prop-mean} we obtain that $$\sup_{P'(\frac{1}{2})}\overline{v}^{-p}\leq C\iint_{P'(1)}\overline{v}^{-p}\dr^{n}dt\leq C\iint_{P'(1)}
\overline{v}^{-p}\dr_{\beta}^{n}dt,$$ for any $p>0,$ which indicates that $$\inf_{P'(\frac{1}{2})}v+k\geq C\left(\iint_{P'(1)}
\overline{v}^{p}\dr_{\beta}^{n}dt\iint_{P'(1)}\overline{v}^{-p}\dr_{\beta}^{n}dt\right)^{-1/p}(\iint_{P'(1)}v\dr_{\beta}^{n}dt)^{1/p}.$$
Now we need to prove $$\iint_{P'(1)}\overline{v}^{p}\dr_{\beta}^{n}dt\iint_{P'(1)}\overline{v}^{-p}\dr_{\beta}^{n}dt\leq C$$ for some
$p=p_{0}>0.$ It suffices to prove that $$\iint_{P'(1)}e^{p_{0}|h|}\dr_{\beta}^{n}dt\leq C,$$ where $h=\log\overline{v}
-\frac{1}{|P'(1)|_{\dr_{\beta}}}\iint_{P'(1)}\overline{v}\dr_{\beta}^{n}dt.$ As $e^{p_{0}|h|}=1+|p_{0}h|+\frac{|p_{0}h|^{2}}{2!}+\cdots
+\frac{|p_{0}h|^{l}}{l!}+\cdots$ it suffices to bound $\iint_{P'(1)}|h|^{l}\dr_{\beta}^{n}dt$ for any positive integer $l$.

Denote $det(u_{k\bar{l}})u^{i\bar{j}}$ by $U^{i\bar{j}}$ and by \cite{Ti14}, we have $U_{i}^{i\bar{j}}=0$ (See also Lemma \ref{lem-3.1}
in the next section). Now take $\overline{v}^{-1}\xi^{2}$ as the test function in \eqref{eq:supersolution}, we have that
\begin{align*}
&-\iint_{P'(1)}\frac{k}{\overline{v}}\xi^{2}\dr_{\beta}^{n}dt\leq\iint_{P'(1)}\overline{v}^{-1}\xi^{2}
(\dt-\Delta')\overline{v}\dr_{\beta}^{n}dt\\=&\iint_{P'(1)}\xi^{2}\left(\dt h-\overline{v}^{-1}U^{i\bar{j}}\overline{v}_{i\bar{j}}
det(u_{k\bar{l}}^{-1})\right)\dr_{\beta}^{n}dt\\=&\iint_{P'(1)}\left(-2h\xi\dot{\xi}+2\xi\xi_{\bar{j}}\overline{v}^{-1}U^{i\bar{j}}
\overline{v}_{i} det(u_{k\bar{l}}^{-1})+\xi^{2}(-\overline{v}^{-2}U^{i\bar{j}}\overline{v}_{i}\overline{v}_{\bar{j}}det(u_{k\bar{l}}^{-1})
+U^{i\bar{j}}h_{i}det(u_{k\bar{l}}^{-1})_{\bar{j}})\right)\dr_{\beta}^{n}dt \\=&\iint_{P'(1)}\left(-2h\xi\dot{\xi}+2\xi\xi_{\bar{j}}h_{i}
u^{i\bar{j}}-\xi^{2}|\nabla'h|^{2}-hU^{i\bar{j}}(2\xi\xi_{i}det(u_{k\bar{l}}^{-1})_{\bar{j}}+\xi^{2}det(u_{k\bar{l}}^{-1})_{i\bar{j}})
\right)\dr_{\beta}^{n}dt
\end{align*}
For the last term, we have that
\begin{align*}
-U^{i\bar{j}}det(u_{k\bar{l}}^{-1})_{i\bar{j}}&=U^{i\bar{j}}\left(\frac{(\log det(u_{k\bar{l}}))_{i}}{det(u_{k\bar{l}})}\right)_{\bar{j}}
\\&=u^{i\bar{j}}((\log det(u_{k\bar{l}}))_{i\bar{j}}-det(u_{k\bar{l}})^{-2}det(u_{k\bar{l}})_{i}det(u_{k\bar{l}})_{\bar{j}})\\&=-R(\dr)-
|\nabla'detdet(u_{k\bar{l}})|^{2}\leq C-|\nabla'det(u_{k\bar{l}})|^{2},
\end{align*}
where the last inequality follows from Lemma \ref{lem:curvature}. Make use of this and Cauchy-Schwarz inequality, we have
\begin{equation}\label{eq:iteration1}
\iint_{P'(1)}\xi^{2}|\nabla'h|^{2}\dr_{\beta}^{n}dt\leq C\iint_{P'(1)}(|\nabla'\xi|^{2}+k\overline{v}^{-1}\xi^{2}-2h\xi\dot{\xi}+C(\xi^{2}
+|\nabla\xi|^{2})h)\dr_{\beta}^{n}dt,
\end{equation}
where $\nabla'$ denotes the gradient with respect to $\dr,$ which also have corresponding Sobolev inequality and Poincare inequality as it
is equivalent to $\dr_{\beta}.$ Note that in the new spacetime domain $P'(1),$ the first term in the Sobolev
inequality \eqref{eq:Sobolev1} can be canceled and by Poincare inequality we have that $$\iint_{P'(1)}|h|^{2}\dr_{\beta}^{n}dt\leq C.$$

         For $l>2,$ take $\overline{v}^{-1}\xi^{2}h^{2l}$ as the test function in \eqref{eq:supersolution} and the similar estimate above,
we have that
\begin{align*}
\iint_{P'(1)}\xi^{2}h^{2l}|\nabla'h|^{2}\dr_{\beta}^{n}dt&\leq\iint_{P'(1)}(2l\xi^{2}h^{2l-1}|\nabla'h|^{2}+k\overline{v}^{-1}\xi^{2}h^{2l}
\\&+2\xi\nabla'\xi\cdot h^{2l}\nabla'h+\frac{1}{2l+1}(C(|\nabla\xi|^{2}+\xi^{2})h^{2l+1}-2\xi\dot{\xi}h^{2l+1}))\dr_{\beta}^{n}dt.
\end{align*}
Apply Cauchy-Schwarz inequality and Young's inequality, we obtain that
$$\iint_{P'(1)}|\nabla'(\xi h^{l+1})|^{2}\dr_{\beta}^{n}dt\leq C\iint_{P'(1)}\left((2l)^{2l+2}\xi^{2}|\nabla'h|^{2}+(2l)^{\alpha}
(|\nabla'\xi|^{2}+|\dot{\xi}|+C)|h|^{2(l+1)}\right)\dr_{\beta}^{n}dt,$$ where $\alpha>0.$ Then follow \cite{HL} to use Moser's iteration,
we can prove that there exists $p_{0}>0$ such that $\iint_{P'(1)}e^{p_{0}|h|}\dr_{\beta}^{n}dt\leq C.$

         Finally, we need to prove that the inequality \eqref{eq:weak harnack} is true for $p=1.$ We only need to take $\xi^{2}\overline{v}
^{-\alpha}$ as the test function to \eqref{eq:supersolution} where $\alpha\in(0,1),$ then we have
$$\frac{2\alpha}{(1-\alpha)^{2}}\iint_{P'(1)}\xi^{2}|\nabla'\overline{v}^{\frac{1-\alpha}{2}}|^{2}\dr_{\beta}^{n}dt\leq\iint_{P'(1)}(\frac
{C(|\nabla\xi|^{2}+\xi^{2})-2\xi\dot{\xi}}{1-\alpha}+\xi^{2}+\frac{2}{\alpha}|\nabla'\xi|^{2})\overline{v}^{1-\alpha}\dr_{\beta}^{n}dt.$$
As \cite{HL}, the proposition follows from Sobolev inequality and finite steps of iterations.
\end{proof}

        We end this section by a lemma which indicates that Corollary \ref{cor-holder} follows from Theorem \ref{mainthm}:
\begin{lemma}\label{lem-local integral}
 Suppose $u\in H_{loc}^{1}(P(1))$ satisfies $$\iint_{P(r)}|\nabla u|^{2}\dr_{\beta}^{n}dt\leq Cr^{2n+2\alpha},$$ then $u\in C^{\alpha}$
at the origin.
\end{lemma}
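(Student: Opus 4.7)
The plan is a parabolic Campanato-Morrey iteration on dyadic cylinders: Poincar\'e turns the Morrey-type decay of $\iint |\nabla u|^{2}\dr_{\beta}^{n}dt$ into a corresponding decay of $\iint |u - u_{r}|^{2}\dr_{\beta}^{n}dt$, and comparing dyadic averages produces a H\"older modulus at the origin.

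The first ingredient is a scale-invariant conic Poincar\'e inequality on $B(r)$: for $f \in H^{1}(B(r))$ with spatial average $\bar{f}$,
\begin{equation*}
\int_{B(r)} |f - \bar{f}|^{2} \dr_{\beta}^{n} \leq C r^{2} \int_{B(r)} |\nabla f|^{2} \dr_{\beta}^{n}.
\end{equation*}
This is the conic analogue of the Euclidean inequality. One way to establish it is to pull $f$ back through $z_{i} = (\beta_{i} w_{i})^{1/\beta_{i}}$ as in the proof of Corollary \ref{cor-heat-energy}, so that $\dr_{\beta}$ is Euclidean on the angle domain, rescale $w \mapsto r w$ to reduce to $r = 1$ (under which $\dr_{\beta}$ is homogeneous of degree two), and invoke the classical Poincar\'e inequality, which is insensitive to the angular identifications. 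Applying this slice-by-slice to $u(\cdot, t)$, integrating in $t$, and then using a one-dimensional Poincar\'e estimate in time to swap the time-dependent average $\bar{u}(t)$ for the spacetime average $u_{r}$ of $u$ over $P(r)$, gives
\begin{equation*}
\iint_{P(r)} |u - u_{r}|^{2} \dr_{\beta}^{n} dt \leq C r^{2n+2+2\alpha}.
\end{equation*}

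Since $|P(r)|_{\dr_{\beta}} \simeq r^{2n+2}$, Cauchy-Schwarz on $P(r/2) \subset P(r)$ yields
\begin{equation*}
|u_{r} - u_{r/2}|^{2} \leq \frac{1}{|P(r/2)|_{\dr_{\beta}}} \iint_{P(r/2)} |u - u_{r}|^{2} \dr_{\beta}^{n} dt \leq C r^{2\alpha}.
\end{equation*}
Summing the resulting geometric series along $r = 2^{-k} r_{0}$ produces a value $u(0) := \lim_{r \to 0} u_{r}$ with $|u_{r} - u(0)| \leq C r^{\alpha}$ for all small $r$. Since parabolic Lebesgue differentiation holds, for a.e.\ $(x,t)$ near the origin one has $u(x,t) = \lim_{\rho \to 0} u_{P_{\rho}(x,t)}$, and chaining the averaging estimate along a sequence of nested parabolic cylinders connecting $(x,t)$ to the origin of parabolic size comparable to the parabolic distance $d_{par}((x,t), 0)$ yields $|u(x,t) - u(0)| \leq C\, d_{par}((x,t), 0)^{\alpha}$ for a.e.\ such $(x,t)$; after modifying $u$ on a Lebesgue-null set this is the claimed $C^{\alpha}$-property at the origin. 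The only genuinely new input compared with the Euclidean case is the conic Poincar\'e inequality, which I expect to be the main (though routine) point; the remaining dyadic iteration is the parabolic analogue of Lemma~4.3 on page~75 of \cite{HL}.
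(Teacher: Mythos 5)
Your strategy matches the paper's: slice-wise spatial Poincar\'e to pass from the Morrey decay of $\iint|\nabla u|^{2}$ to a Campanato-type decay of the $L^{2}$-oscillation, then the dyadic averaging iteration from \cite{HL}. The paper's own proof is the same two lines, so in spirit you have reproduced it.

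However, the step where you ``use a one-dimensional Poincar\'e estimate in time to swap the time-dependent average $\bar{u}(t)$ for the spacetime average $u_{r}$'' is a genuine gap, and it cannot be repaired from the stated hypothesis. A Poincar\'e inequality on $[-r^{2},0]$ applied to $t\mapsto\bar{u}(t)$ would require control on $\dot{\bar{u}}(t)=\tfrac{1}{|B(r)|}\int_{B(r)}\partial_{t}u\,\dr_{\beta}^{n}$, i.e.\ on $\partial_{t}u$, but the assumption only bounds the \emph{spatial} gradient $\nabla u$. Indeed, the lemma as literally stated is false: take $u(x,t)=f(t)$ for $f\in H^{1}$ but $f\notin C^{\alpha}$. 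Then $\nabla u\equiv 0$ and the hypothesis holds with $C=0$, yet $u$ has no H\"older modulus at the origin. Slice-wise Poincar\'e only bounds $\int_{-r^{2}}^{0}\int_{B(r)}|u-\bar{u}_{B(r)}(t)|^{2}$, and since $\bar{u}_{B(r)}(t)$ is not a constant this sits \emph{below}, not above, $\inf_{c}\iint_{P(r)}|u-c|^{2}$; the quantity the Campanato embedding requires is the latter. (The paper's ``smallest square theorem'' line has the same inequality reversed for the same reason.) What the hypothesis actually yields is spatial $C^{\alpha}$-regularity of $u(\cdot,t)$, uniformly in $t$; H\"older continuity in $t$ for the function to which this lemma is eventually applied (the complex Hessian of the flow potential) has to be extracted separately, from the parabolic equation \eqref{eq:hessian} it satisfies, and is not a consequence of the Morrey bound alone.
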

\begin{proof}
 We only need to note that by smallest square theorem and Poincare Inequality,
\begin{align*}
 \iint_{P(r)}|u-\frac{1}{|P(r)|}\iint_{P(r)}u\dr_{\beta}^{n}dt|^{2}\dr_{\beta}^{n}dt\leq&\int_{-r^{2}}^{0}\int_{B(r)\times\{t\}}
|u-\frac{1}{|B(r)|}\int_{B(r)\times\{t\}}u\dr_{\beta}^{n}|^{2}\dr_{\beta}^{n}dt\\ \leq&Cr^{2}\iint_{P(r)}|\nabla u|^{2}\dr_{\beta}^{n}dt
\leq Cr^{2n+2+2\alpha},
\end{align*}
then this lemma follows from the same argument of Theorem 3.1 on Page 48 of \cite{HL}.
\end{proof}

\section{Proof of main theorem \ref{mainthm}}

       We begin the proof of main theorem \ref{mainthm} along Tian's second approach in \cite{Ti14}. We denote $det(u_{k\bar{l}})
u^{i\bar{j}}$ by $U^{i\bar{j}}$ and recall two lemmas in \cite{Ti14}:
\begin{lemma}\label{lem-3.1}
 For each j, $U_{i}^{i\bar{j}}=0.$
\end{lemma}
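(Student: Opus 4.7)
The plan is to establish this as a manifestation of the classical identity that the complex cofactor matrix of a Hermitian matrix is divergence-free, adapted to the conic setting. First I would work in the $w$-coordinates introduced in the proof of Corollary \ref{cor-heat-energy}, where the change of variables $z_i = (\beta_i w_i)^{1/\beta_i}$ transforms $\dr_\beta$ into the standard flat Euclidean metric $\sqrt{-1}\sum dw_i \wedge d\bar w_i$. In these coordinates the Christoffel symbols vanish, so the covariant derivative with respect to $\dr_\beta$ collapses to the ordinary partial derivative $\partial_{w_i}$, and the asserted identity becomes
$$\partial_{w_i}\bigl(\det(u_{k\bar{l}})\, u^{i\bar{j}}\bigr) = 0.$$

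Expanding by the Leibniz rule gives two contributions. For the first, Jacobi's formula yields
$$\partial_{w_i}\det(u_{k\bar{l}}) = \det(u_{k\bar{l}})\, u^{p\bar{q}} u_{p\bar{q}i},$$
and for the second, differentiating $u^{i\bar{j}} u_{j\bar{k}} = \delta^i_k$ gives
$$\partial_{w_i} u^{i\bar{j}} = -u^{i\bar{p}} u^{q\bar{j}} u_{q\bar{p}i}.$$
Substituting and factoring out $\det(u_{k\bar{l}})$, it suffices to show
$$u^{p\bar{q}} u^{i\bar{j}} u_{p\bar{q}i} - u^{i\bar{p}} u^{q\bar{j}} u_{q\bar{p}i} = 0.$$
This follows from the symmetry of the holomorphic third derivatives $u_{q\bar{p}i} = u_{i\bar{p}q}$ (valid wherever $u$ is $C^3$, in particular on the smooth locus $M \setminus D$): relabeling $(i,p,q) \mapsto (p,i,q)$ in the second term and reindexing $q \leftrightarrow q$ makes it coincide with the first, so the difference vanishes identically.

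The main obstacle is justifying this pointwise identity near the divisor $D$, where $u$ is only $C^{1,\alpha}$ (or $C^{1,1}$) in a weak sense and $U^{i\bar{j}}$ is a priori only an $L^\infty$ tensor. My approach would be to interpret $U^{i\bar{j}}_i = 0$ in the distributional sense: multiply by a smooth test function supported in $U$, integrate by parts against $\partial_{w_i}$, and observe that the resulting boundary contribution from a tubular neighborhood of $\{w_i=0\}$ vanishes in the limit because the codimension-$2$ Hausdorff measure of the divisor is zero and $U^{i\bar{j}}$ is uniformly bounded by the Laplacian estimate \eqref{eq:laplacian}. Combined with an approximation via the smooth conic approximants from \cite{CGP,GP,LZ}, for which the pointwise identity is the standard cofactor identity, one passes to the limit to obtain the weak identity, which is precisely what is invoked in Proposition \ref{prop-weak harnack} where $U^{i\bar{j}}_i$ appears only under integration by parts against test functions.
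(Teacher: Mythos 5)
Your computation is essentially the same as the paper's one-line proof: both expand $(\det(u_{k\bar l})\,u^{i\bar j})_i$ via Jacobi's formula and the derivative of the inverse matrix, obtaining $\det(u_{k\bar l})\bigl(u^{k\bar l}u_{k\bar l i}u^{i\bar j}-u^{i\bar l}u^{k\bar j}u_{k\bar l i}\bigr)$, which vanishes after applying the symmetry $u_{k\bar l i}=u_{i\bar l k}$ of the mixed third derivatives and relabeling the summation indices (the paper's phrase ``switch $j$ and $l$'' is a typo for this relabeling, and your ``$(i,p,q)\mapsto(p,i,q)$ and reindexing $q\leftrightarrow q$'' should read the cyclic permutation $(i,p,q)\mapsto(p,q,i)$, but the idea is right). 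Your added discussion of passing to flat $w$-coordinates and of the distributional justification near $D$ is extra care that the paper leaves tacit; it is not needed to establish the pointwise identity on $M\setminus D$, which is all that is used in the subsequent integral estimates.
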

\begin{proof}
$$  U_{i}^{i\bar{j}}=(det(u_{k\bar{l}})u^{i\bar{j}})_{i}=det(u_{k\bar{l}})(u^{k\bar{l}}u_{k\bar{l}i}u^{i\bar{j}}-u^{i\bar{l}}
u^{k\bar{j}}u_{k\bar{l}i})=0,$$
as we can switch $j$ and $l$.
\end{proof}
\begin{lemma}\label{lem-3.2}
For any positive definite Hermitian matrix $(\lambda_{i\bar{j}}),$ we have
 \begin{equation}\label{eq:second order}
  |det(\lambda_{p\bar{q}})\lambda^{i\bar{j}}u_{i\bar{j}}-det(u_{p\bar{q}})-(n-1)det(\lambda_{p\bar{q}})|\leq
C\sum_{i,j}|u_{i\bar{j}}-\lambda_{i\bar{j}}|^{2},
 \end{equation}
where $\lambda^{i\bar{j}}$ denotes the inverse of $\Lambda=(\lambda_{i\bar{j}})$ and C is a constant depending only on the norms of
$\Lambda$ and $u_{i\bar{j}}.$
\end{lemma}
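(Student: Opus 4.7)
The plan is to read the left-hand side as the quadratic-and-higher Taylor remainder of $\det$ around the matrix $\Lambda$. Write $A = (u_{i\bar{j}})$ and $H = A - \Lambda$, and introduce
$$G(A) := \det(\Lambda)\,\lambda^{i\bar{j}}u_{i\bar{j}} - \det(A) - (n-1)\det(\Lambda),$$
so that the claim is $|G(A)| \le C\|H\|^{2}$ with $\|H\|^{2} = \sum_{i,j}|u_{i\bar j} - \lambda_{i\bar j}|^{2}$.

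First I would verify that both the value and the first variation of $G$ vanish at $A = \Lambda$. Direct substitution gives $\lambda^{i\bar{j}}\lambda_{i\bar{j}} = n$, hence $G(\Lambda) = n\det(\Lambda) - \det(\Lambda) - (n-1)\det(\Lambda) = 0$. Using the Jacobi formula $d\det|_{\Lambda}(H) = \det(\Lambda)\operatorname{tr}(\Lambda^{-1}H)$, the derivative of $G$ at $\Lambda$ in the Hermitian direction $H$ is
$$dG|_{\Lambda}(H) = \det(\Lambda)\operatorname{tr}(\Lambda^{-1}H) - \det(\Lambda)\operatorname{tr}(\Lambda^{-1}H) = 0,$$
so the Taylor expansion of $G$ at $\Lambda$ begins at order two in the entries of $H$.

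To make this quantitative I would exploit that $\det$ is a polynomial via the exact expansion
$$\det(\Lambda+H) = \det(\Lambda)\det(I + \Lambda^{-1}H) = \det(\Lambda)\sum_{k=0}^{n} e_{k}(\Lambda^{-1}H),$$
where $e_{k}$ is the $k$-th elementary symmetric polynomial of the eigenvalues of $\Lambda^{-1}H$ (so $e_{0}=1$, $e_{1} = \operatorname{tr}$, $e_{n} = \det$). Substituting this expansion into the definition of $G$ cancels the $k=0$ and $k=1$ contributions and leaves
$$G(A) = -\det(\Lambda)\sum_{k=2}^{n} e_{k}(\Lambda^{-1}H).$$
Each $e_{k}(\Lambda^{-1}H)$ is a homogeneous polynomial of degree $k$ in the entries of $H$, so $|e_{k}(\Lambda^{-1}H)| \le C_{k}\|\Lambda^{-1}\|^{k}\|H\|^{k}$; once $\|H\|$ is confined to a bounded range, the $k=2$ term dominates all the rest and yields $|G(A)| \le C\|H\|^{2}$.

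There is no real obstacle here — the content is just a Taylor estimate for a polynomial function — but the constant $C$ depends on both $\|\Lambda^{-1}\|$ (through the Jacobi formula) and an upper bound for $\|H\|$ (needed to absorb the higher-degree symmetric functions into the quadratic term). In the intended application, both ingredients are automatic: the Laplacian estimate \eqref{eq:laplacian} keeps $(u_{i\bar j})$ and $(\lambda_{i\bar j})$ in a fixed compact subset of positive definite Hermitian matrices, so $C$ can be chosen uniformly as claimed.
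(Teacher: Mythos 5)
Your argument is correct and is essentially the same as the paper's: both verify that the expression vanishes together with its first derivative at $u_{i\bar{j}}=\lambda_{i\bar{j}}$ and then invoke the Taylor expansion of $\det$. The only difference is that you make the remainder explicit via the elementary-symmetric-polynomial expansion of $\det(I+\Lambda^{-1}H)$, which is a slightly more detailed way of writing the same quadratic Taylor bound.
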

\begin{proof}
 Consider the function f of matrix $u_{i\bar{j}}$ that $f(u_{i\bar{j}})=det(\lambda_{p\bar{q}})\lambda^{i\bar{j}}u_{i\bar{j}}-
det(u_{p\bar{q}})-(n-1)det(\lambda_{p\bar{q}})$ at the point $\Lambda=(\lambda_{i\bar{j}})$ that $f(\lambda_{i\bar{j}})=0$
and $$\frac{\partial f}{\partial u_{i\bar{j}}}|_{u_{i\bar{j}}=\lambda_{i\bar{j}}}=det(\lambda_{p\bar{q}})\lambda^{i\bar{j}}-
det(u_{p\bar{q}})u^{i\bar{j}}|_{u_{i\bar{j}}=\lambda_{i\bar{j}}}=0.$$
Then this lemma follows from Taylor expansion of f at point $u_{i\bar{j}}=\lambda_{i\bar{j}}.$
\end{proof}

       From now on we will modify Tian's arguments in \cite{Ti14} to parabolic case. First, we have
\begin{lemma}\label{lem-3.3}
 There exsits some $q>2$ which may depend on $\beta,||u_{i\bar{j}}||_{L^{\infty}}$ and $||F_{i\bar{j}}||_{L^{\infty}}$ such that
for any $P(2r)\subset U\times(-1,0]$ around $(y,0),$ we have
\begin{equation}\label{eq:Gehring}
 \left(\frac{1}{r^{2n+2}}\iint_{P(r)}(1+|\nabla\dr|)^{q}\dr_{\beta}^{n}dt\right)^{\frac{2}{q}}\leq\frac{C}{r^{2n+2}}\iint_{P(2r)}(1+
|\nabla\dr|)^{2}\dr_{\beta}^{n}dt,
\end{equation}
where C denotes a uniform constant.
\end{lemma}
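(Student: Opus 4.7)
The plan is to establish a parabolic reverse Hölder (higher integrability) inequality via the standard Giaquinta--Modica route: first derive a Caccioppoli-type estimate for $|\nabla\dr|$ from the Monge-Amp\`ere equation, then upgrade to a reverse H\"older inequality using a parabolic Sobolev--Poincar\'e inequality, and finally self-improve via a parabolic Gehring lemma. The conic setting only enters through the replacement of Euclidean Sobolev by Lemma \ref{lem-Sobolev}, the doubling property of $\dr_{\beta}^{n}\,dt$ on parabolic cylinders, and the divergence structure supplied by Lemma \ref{lem-3.1}.

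First I would differentiate the scalar equation \eqref{eq:conic-potential} in a spatial direction. Writing $\log\det(u_{p\bar q}) = \dt u - F + \log\dr_{\beta}^{n}$ and applying $\partial_{k}$ yields $U^{i\bar j}u_{i\bar j k} = \det(u_{p\bar q})(\dt u - F + \log\dr_{\beta}^{n})_{k}$, which, thanks to Lemma \ref{lem-3.1}, is in a divergence form suitable for integration by parts. Next, choose a space-time cut-off $\xi$ with $\xi\equiv 1$ on $P(r)$, $\supp\xi\subset P(2r)$, and standard derivative bounds, and test the weak formulation against $\xi^{2}(u_{k\bar l} - c_{k\bar l})$, where $c_{k\bar l}$ is the parabolic mean of $u_{k\bar l}$ on $P(2r)$. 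Integrating by parts and applying Cauchy--Schwarz produces a Caccioppoli inequality of the form
\begin{equation*}
\iint_{P(r)}|\nabla\dr|^{2}\dr_{\beta}^{n}dt \;\leq\; \frac{C}{r^{2}}\iint_{P(2r)}|\dr - \bar\dr|^{2}\dr_{\beta}^{n}dt \;+\; C\iint_{P(2r)}\dr_{\beta}^{n}dt.
\end{equation*}
The first term on the right is the obstruction to closing the estimate on $P(2r)$ with the same power of $|\nabla\dr|$; the second is an $L^\infty$ contribution accounting for $F$ and the background terms, absorbed into the additive $1$ in $(1+|\nabla\dr|)$.

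Next I would apply a parabolic Sobolev--Poincar\'e inequality, derived by combining Lemma \ref{lem-Sobolev} with standard time-slice averaging, to bound
\begin{equation*}
\iint_{P(2r)}|\dr-\bar\dr|^{2}\dr_{\beta}^{n}dt \;\leq\; C r^{2}\Bigl(\fint_{P(2r)}|\nabla\dr|^{2s}\dr_{\beta}^{n}dt\Bigr)^{1/s}\cdot|P(2r)|_{\dr_{\beta}},
\end{equation*}
for some $s\in(\tfrac{n}{n+1},1)$. Combining with the Caccioppoli inequality gives a reverse H\"older inequality
\begin{equation*}
\fint_{P(r)}(1+|\nabla\dr|)^{2}\dr_{\beta}^{n}dt \;\leq\; C\Bigl(\fint_{P(2r)}(1+|\nabla\dr|)^{2s}\dr_{\beta}^{n}dt\Bigr)^{1/s}.
\end{equation*}
A parabolic version of Gehring's lemma (Giaquinta--Modica, or the metric--measure version of Kinnunen--Lewis applied to the doubling measure $\dr_\beta^n dt$) then self-improves this inequality to the conclusion with exponent $q>2$.

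The main obstacle is the Caccioppoli step: one must set up the test function so that it respects the boundary condition \eqref{eq:bdry} along each divisor component, and so that the divergence structure from $U^{i\bar j}_{i}=0$ is actually exploited to absorb the boundary contributions along $D$. Once the weak formulation is written in terms of $U^{i\bar j}$ (rather than the Laplacian of $\dr$), the integration by parts produces no boundary terms on $\partial\B$ thanks to the angle-twisted boundary condition, and the singular factors in $\dr_{\beta}^{n}$ cancel against those in $U^{i\bar j}/\det(u_{p\bar q})$. After this is in place, the Sobolev--Poincar\'e and Gehring steps are essentially formal in the metric-measure space $(\B, \dr_{\beta}^{n}, dt)$ whose analytic properties have already been catalogued in Section 2.
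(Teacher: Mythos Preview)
Your overall strategy is exactly the paper's: obtain a Caccioppoli inequality for $|\nabla\dr|^{2}$ in terms of $r^{-2}\iint|u_{i\bar j}-\lambda_{i\bar j}|^{2}$, feed it into the parabolic Sobolev--Poincar\'e inequality \eqref{eq:Sobolev2} to get a reverse H\"older inequality with exponent $2s<2$, and finish with Gehring's lemma. The substantive difference lies in how the Caccioppoli inequality is produced. You propose the standard quasilinear move of differentiating once and testing against $\xi^{2}(u_{k\bar l}-c_{k\bar l})$; the paper instead integrates
\[
\iint_{P(r)}U^{i\bar j}\Bigl(\sum_{k}\tfrac{\lambda}{\lambda_{k}}u_{k\bar k}-\det(u_{p\bar q})-(n-1)\lambda\Bigr)\eta_{i\bar j}\,\dr_{\beta}^{n}dt
\]
by parts twice using Lemma \ref{lem-3.1}, so that the Hessian equation \eqref{eq:hessian} manufactures the full third-order energy $\det(u)\,u^{i\bar q}u^{p\bar j}u_{i\bar jk}u_{p\bar q\bar k}$ on one side (the time term being handled with the scalar-curvature lower bound of Lemma \ref{lem:curvature}), while the other side is controlled pointwise by $C|u_{i\bar j}-\lambda_{i\bar j}|^{2}$ via the algebraic Lemma \ref{lem-3.2}. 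That lemma is really the engine here: for a fully nonlinear operator the naive test-function computation produces cross terms like $(u_{k\bar j}-c_{k\bar j})U^{i\bar j}\psi_{i}$ that do not close up into $|\nabla\dr|^{2}$ without an additional identity, and Lemma \ref{lem-3.2} (the second-order Taylor expansion of $\det$ at $\Lambda$) is precisely Tian's device to linearize the problem and recover the quasilinear Caccioppoli structure. Once Caccioppoli is in hand the two arguments coincide verbatim.
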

\begin{proof}
 First assume that $y=x_{0}$ and move $t_{0}$ to be 0. Define $\lambda_{i\bar{j}}:=r^{-(2n+2)}\iint_{P(r)}u_{i\bar{j}}\dr_{\beta}^{n}dt$
for $i,j=1,\cdots,n.$ By using unitary transformation if necessary, we assume that $\lambda_{i\bar{j}}=0$ if $i\neq j$ and $i,j>l.$ By
Laplacian estimate \eqref{eq:c2}, we have $A^{-1}\mathbf{I}\leq \Lambda=(\lambda_{i\bar{j}})\leq A\mathbf{I}.$ Choose a cutoff function
$\eta=\eta_{1}(x)\eta_{2}(t)$ on $P(r)$ such that $\eta_{1}(x)=1$ on $B(y,\frac{3r}{4}),\eta_{1}(x)=0$ near $\partial B(y,1),$
and $\eta_{2}(t)=1$ on $[-\frac{9r^{2}}{16},0],\eta_{2}(t)=0$ near $t=-r^{2},$ with $|\nabla\eta_{1}|^{2},|\nabla^{2}\eta_{1}|,|\dot
{\eta_{2}}|\leq\frac{C}{r^{2}}.$ Make use of the equation \eqref{eq:hessian} and Lemma \ref{lem-3.1}, denote $\lambda_{k}=\lambda_{k
\bar{k}}$ and $\lambda=\lambda_{1}\cdots\lambda_{n},$ we have
\begin{align*}
 &\iint_{P(r)}U^{i\bar{j}}\left(\sum_{k=1}^{n}\frac{\lambda}{\lambda_{k}}u_{k\bar{k}}-det(u_{p\bar{q}})-(n-1)\lambda\right)\eta_{i\bar{j}}
\dr_{\beta}^{n}dt\\=&\iint_{P(r)}U^{i\bar{j}}\left((\sum_{k=1}^{n}\frac{\lambda}{\lambda_{k}}u_{k\bar{k}i\bar{j}})\eta-det(u_{p\bar{q}})
\eta_{i\bar{j}}\right)\dr_{\beta}^{n}dt\\ \geq&\iint_{P(r)}\sum_{k=1}^{n}\frac{\lambda}{\lambda_{k}}det(u_{p\bar{q}})(u^{i\bar{q}}
u^{p\bar{j}}u_{i\bar{j}k}u_{p\bar{q}\bar{k}}+\dt u_{k\bar{k}}-f_{k\bar{k}})\eta\dr_{\beta}^{n}dt-Cr^{2n}\\ \geq&c\iint_{P(r)}\eta|\nabla
\dr|^{2}\dr_{\beta}^{n}dt+\int_{B(r)\times\{0\}}\sum_{k=1}^{n}\frac{\lambda}{\lambda_{k}}det(u_{p\bar{q}})u_{k\bar{k}}\eta\dr_{\beta}^{n}
\\&-\iint_{P(r)}\sum_{k=1}^{n}\frac{\lambda}{\lambda_{k}}u_{k\bar{k}}det(u_{p\bar{q}})(\dot{\eta}+\eta\dt(\log det(u_{p\bar{q}})))\dr_
{\beta}^{n}dt-Cr^{2n}\\ \geq&c\iint_{P(r)}\eta|\nabla\dr|^{2}\dr_{\beta}^{n}dt+\iint_{P(r)}\sum_{k=1}^{n}\frac{\lambda}{\lambda_{k}}
u_{k\bar{k}}det(u_{p\bar{q}})(R-\dot{\eta})\dr_{\beta}^{n}dt-Cr^{2n}\\ \geq&c\iint_{P(r)}\eta|\nabla\dr|^{2}\dr_{\beta}^{n}dt-Cr^{2n},
\end{align*}
where the last inequality uses the Lemma \ref{lem:curvature} that the scalar curvature stays uniformly bounded from below.
By Lemma \ref{lem-3.2}, we have $$\iint_{P(\frac{3r}{4})}|\nabla\dr|^{2}\dr_{\beta}^{n}dt\leq C\left(r^{2n}+r^{-2}\iint_{P(r)}\sum_{i,j}
|u_{i\bar{j}}-\lambda_{i\bar{j}}|^{2}\dr_{\beta}^{n}dt\right).$$ By Sobolev inequality \eqref{eq:Sobolev2} in Lemma \ref{lem-Sobolev},
\begin{equation}\label{eq:de-expo}
 \iint_{P(\frac{3r}{4})}(1+|\nabla\dr|)^{2}\dr_{\beta}^{n}dt\leq Cr^{(2n+2)(1-\frac{1}{s})}\left(\iint_{P(r)}(1+|\nabla\dr|)^{2s}
\dr_{\beta}^{n}dt\right)^{\frac{1}{s}},
\end{equation}
where $s\in(\frac{n}{n+1},1).$ This inequality holds for parabolic neighborhoods with center point on or not on the divisor. Then by a
covering argument that for any parabolic neighborhood $P((y,t),2r)\subset U\times (-1,0],$
$$\frac{1}{r^{2n+2}}\iint_{P(r)}(1+|\nabla\dr|)^{2}\dr_{\beta}^{n}dt\leq C\left(\frac{1}{r^{2n+2}}\iint_{P(2r)}(1+|\nabla\dr|)^{2s}
\dr_{\beta}^{n}dt\right)^{\frac{1}{s}}.$$
Then \eqref{eq:Gehring} follows from Gehring's inverse H\"{o}lder inequality \cite{Ge}.
\end{proof}
      Similar to \cite{Ti14}, we can define a $(1,1)$-form v solving a heat equation:
\begin{equation}\label{eq:heat}
 \dt v=\sum_{i=1}^{n}\frac{1}{\lambda_{i}}v_{i\bar{i}}\;on\;P(r)\quad v=\dr\;on\;B(r)\times\{-r^{2}\}
\bigcup\partial B(r)\times(-r^{2},0].
\end{equation}
Then we have such a lemma:
\begin{lemma}\label{lem-3.4}
 For any $P((y,t),4r)\subset U\times (-1,0],$ and $\sigma<r,$ we have
\begin{align}\label{eq:local control}
 &\iint_{P(\sigma)}|\nabla\dr|^{2}\dr_{\beta}^{n}dt-cr^{2n+2}\\ \leq& c\left[(\frac{\sigma}{r})^{2n-2+2\overline{\beta}^{-1}}+\left(\frac{1}
{r^{2n}}\iint_{P(\sigma)}(1+|\nabla\dr|^{2})\dr_{\beta}^{n}dt\right)^{\frac{q-2}{q}}\right]\iint_{P(r)}|\nabla\dr|^{2}
\dr_{\beta}^{n}dt.\nonumber
\end{align}
\end{lemma}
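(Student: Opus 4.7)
The plan is to implement a parabolic analogue of Tian's comparison-with-heat-equation argument in \cite{Ti14}, comparing the evolving form $\dr=\ddb u$ with the solution $v$ of the constant-coefficient linear heat equation \eqref{eq:heat}. After recentering so that the parabolic neighborhood is $P(r)=B(r)\times(-r^{2},0]$, I would split
$$\iint_{P(\sigma)}|\nabla\dr|^{2}\dr_{\beta}^{n}dt\;\leq\;2\iint_{P(\sigma)}|\nabla v|^{2}\dr_{\beta}^{n}dt+2\iint_{P(\sigma)}|\nabla(\dr-v)|^{2}\dr_{\beta}^{n}dt$$
and estimate the two pieces independently.

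For the $v$-piece: after the unitary diagonalization used in Lemma \ref{lem-3.3}, the coefficients $\lambda_{k}^{-1}$ are constant and uniformly equivalent to those of $\dr_{\beta}$ by \eqref{eq:c2}, so a linear change of spatial coordinates reduces \eqref{eq:heat} to a standard heat equation on $\B$ to which Corollary \ref{cor-heat-energy} applies, giving
$$\iint_{P(\sigma)}|\nabla v|^{2}\dr_{\beta}^{n}dt\;\les\;\Bigl(\frac{\sigma}{r}\Bigr)^{2n-2+2\overline{\beta}^{-1}}\iint_{P(r)}|\nabla v|^{2}\dr_{\beta}^{n}dt.$$
The right-hand side is bounded by $\iint_{P(r)}|\nabla\dr|^{2}\dr_{\beta}^{n}dt$ via a standard energy estimate: since $v-\dr$ vanishes on the parabolic boundary of $P(r)$, testing the equation for $v$ against $v-\dr$ and integrating by parts yields this energy comparison. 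This accounts for the first term on the right of \eqref{eq:local control}.

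For the $h:=\dr-v$ piece, I would differentiate the scalar equation \eqref{eq:conic-potential} twice to obtain the parabolic equation
$$\dt u_{i\bar{j}}-u^{k\bar{l}}(u_{i\bar{j}})_{k\bar{l}}=-u^{k\bar{q}}u^{p\bar{l}}u_{p\bar{q}i}u_{k\bar{l}\bar{j}}+F_{i\bar{j}},$$
rewrite its principal part as $\sum_{k}\lambda_{k}^{-1}(u_{i\bar{j}})_{k\bar{k}}+(u^{k\bar{l}}-\lambda^{k\bar{l}})(u_{i\bar{j}})_{k\bar{l}}$, and subtract the corresponding equation for $v_{i\bar{j}}$. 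Testing the resulting parabolic equation for $h_{i\bar{j}}$ against $h_{i\bar{j}}$ itself (summed over indices) and integrating by parts, using Lemma \ref{lem-3.1} to handle the divergence-free structure $U^{i\bar{j}}{}_{,i}=0$ and Lemma \ref{lem:curvature} to absorb sign-uncontrolled scalar curvature terms, I expect to obtain
$$\iint_{P(r)}|\nabla h|^{2}\dr_{\beta}^{n}dt\;\les\;\iint_{P(r)}\sum_{k,l}|u_{k\bar{l}}-\lambda_{k\bar{l}}|^{2}|\nabla\dr|^{2}\dr_{\beta}^{n}dt+r^{2n+2}.$$
Finally I would apply H\"older's inequality with exponents $(q/(q-2),q/2)$ to the cross term, use a Poincar\'e-type bound to exploit that $\lambda_{i\bar{j}}$ is the $L^{1}$-average of $u_{i\bar{j}}$ on $P(r)$ (so that $\iint|u-\lambda|^{2}\les r^{2}\iint(1+|\nabla\dr|^{2})$), and apply Gehring's reverse H\"older inequality \eqref{eq:Gehring} of Lemma \ref{lem-3.3} to the resulting $L^{q}$ norm of $|\nabla\dr|$. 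The factor $r^{2(q-2)/q}$ from the Poincar\'e step combines with the factor $r^{-(2n+2)(q-2)/q}$ from Gehring to yield exactly the scaling $(r^{-2n}\iint(1+|\nabla\dr|^{2}))^{(q-2)/q}$ appearing in \eqref{eq:local control}.

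The main obstacle I anticipate is the matrix-valued energy estimate for $h$. The difficulty is threefold: (i) $h$ is $(1,1)$-form-valued, so the correct test-function contraction must be chosen so that the nonlinear cubic term $u^{k\bar{q}}u^{p\bar{l}}u_{p\bar{q}i}u_{k\bar{l}\bar{j}}$ reassembles into a non-negative $|\nabla\dr|^{2}$ that can be absorbed on the left; (ii) the integration by parts shifting derivatives off of $(u^{k\bar{l}}-\lambda^{k\bar{l}})$ creates terms of the form $\partial_{k}u^{k\bar{l}}$ that only vanish at the averaged-coefficient level, requiring Lemma \ref{lem-3.1} applied to $U^{i\bar{j}}$; and (iii) the parabolic boundary contributions at $t=-r^{2}$ and $t=0$ coming from the time-derivative must be controlled using the $\sup_{t}\!\int$ bound embedded in Lemma \ref{lem-Sobolev}, together with the scalar curvature lower bound from Lemma \ref{lem:curvature}.
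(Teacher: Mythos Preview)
Your overall plan is the paper's: split $\omega$ into the heat solution $v$ and the difference $\hat\omega=\omega-v$, apply Corollary~\ref{cor-heat-energy} to $v$, and control $\iint_{P(r)}|\nabla\hat\omega|^{2}$ by an energy identity combined with Gehring's reverse H\"older inequality. The gap is in your point (i) and in the displayed energy inequality you expect for $h=\hat\omega$.

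When you test the Hessian equation against $\hat\omega_{l\bar k}$ and integrate by parts, the cubic nonlinearity does \emph{not} reassemble into anything sign-definite. What survives is a term of the schematic form $\iint_{P(r)}\hat\omega\cdot|\nabla\omega|^{2}$, and since $\hat\omega$ has no sign this cannot be absorbed into $\iint|\nabla\hat\omega|^{2}$ on the left. The correct energy inequality is the paper's \eqref{eq:lem-3.4-1},
\[
\iint_{P(r)}|\nabla\hat\omega|^{2}\,\omega_{\beta}^{n}\,dt\;\le\;c\Bigl(r^{2n+2}+\iint_{P(r)}\bigl(|\hat\omega|+|u_{i\bar j}-\lambda_{i}\delta_{ij}|^{2}\bigr)\,|\nabla\omega|^{2}\,\omega_{\beta}^{n}\,dt\Bigr),
\]
with the extra $|\hat\omega|$ factor that you dropped. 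That additional term is then handled by the same mechanism you describe for $|u-\lambda|^{2}$: H\"older with exponents $(q/2,\,q/(q-2))$, Gehring \eqref{eq:Gehring} on the $L^{q}$ factor, and a Poincar\'e inequality on $\hat\omega$ (legitimate since $\hat\omega$ vanishes on the parabolic boundary), closing via $\iint|\hat\omega|^{2}\lesssim r^{2}\iint|\nabla\hat\omega|^{2}$ and feeding \eqref{eq:lem-3.4-1} back into itself; this is exactly the paper's \eqref{eq:lem-3.4-3}.

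Two smaller corrections. Neither Lemma~\ref{lem-3.1} nor Lemma~\ref{lem:curvature} is used in this step: the integration by parts is against the fixed volume form $\omega_{\beta}^{n}$, so no divergence of $U^{i\bar j}$ and no scalar-curvature term appear. And the comparison $\iint_{P(r)}|\nabla v|^{2}\lesssim\iint_{P(r)}|\nabla\omega|^{2}$ is obtained not by testing the heat equation against $v-\omega$ but simply by $|\nabla v|^{2}\le 2|\nabla\omega|^{2}+2|\nabla\hat\omega|^{2}$ together with the bound on $\iint|\nabla\hat\omega|^{2}$ just established; see \eqref{eq:lem-3.4-4}.
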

\begin{proof}
 Take $\hat{\dr}=\dr-v$ as the test function for the Hessian equation \eqref{eq:hessian}, considering \eqref{eq:heat}, integrate by parts,
we have
\begin{align*}
 &-\iint_{P(r)}\hat{\dr}_{l\bar{k}}(u^{i\bar{q}}u^{p\bar{j}}u_{i\bar{j}k}u_{p\bar{q}\bar{l}}-F_{k\bar{l}})\dr_{\beta}^{n}dt\\
=&\iint_{P(r)}\hat{\dr}_{l\bar{k}}(\dt u_{k\bar{l}}-u^{i\bar{j}}u_{k\bar{l}i\bar{j}})\dr_{\beta}^{n}dt\\=&\iint_{P(r)}\hat{\dr}_{l\bar{k}}
(\dt\hat{\dr}_{k\bar{l}}+\sum_{i=1}^{n}\frac{1}{\lambda_{i}}v_{i\bar{i}k\bar{l}}-u^{i\bar{j}}u_{k\bar{l}i\bar{j}})\dr_{\beta}^{n}dt\\
=&\iint_{P(r)}\left(\frac{1}{2}\dt|\hat{\dr}|^{2}+\hat{\dr}_{l\bar{k}}(-\sum_{i=1}^{n}\frac{1}{\lambda_{i}}\hat{\dr}_{i\bar{i}
k\bar{l}}+(\frac{1}{\lambda_{i}}\delta_{ij}-u^{i\bar{j}})u_{k\bar{l}i\bar{j}})\right)\dr_{\beta}^{n}dt\\=&\int_{B(r)\times\{0\}}\frac
{|\hat{\dr}|^{2}}{2}\dr_{\beta}^{n}+\iint_{P(r)}\sum_{i=1}^{n}\frac{1}{\lambda_{i}}|\hat{\dr}_{k\bar{l}i}|^{2}\dr_{\beta}^{n}dt
\\&-\iint_{P(r)}\left((\frac{1}{\lambda_{i}}\delta_{ij}-u^{i\bar{j}})\hat{\dr}_{l\bar{k}\bar{j}}u_{k\bar{l}i}+\hat{\dr}_{l\bar{k}}
u^{i\bar{q}}u^{p\bar{j}}u_{k\bar{l}i}u_{p\bar{q}\bar{j}}\right)\dr_{\beta}^{n}dt,
\end{align*}
then by Cauchy-Schwarz inequality, and note that $A^{-1}-B^{-1}=A^{-1}(B-A)B^{-1},$ we obtain that
\begin{equation}\label{eq:lem-3.4-1}
 \iint_{P(r)}|\nabla\hat{\dr}|^{2}\dr_{\beta}^{n}dt\leq c\left(r^{2n+2}+\iint_{P(r)}(|\hat{\dr}|+|u_{i\bar{j}}-\lambda_{i}\delta_{ij}|^{2})
|\nabla\dr|^{2}\dr_{\beta}^{n}dt\right).
\end{equation}
By Lemma \ref{lem-3.3} and Poincare inequality, we have
\begin{align}\label{eq:lem-3.4-2}
&\iint_{P(r)}|u_{i\bar{j}}-\lambda_{i}\delta_{ij}|^{2}|\nabla\dr|^{2}\dr_{\beta}^{n}dt\\ \leq& r^{2n+2}\left(r^{-2n-2}\iint_{P(r)}|\nabla
\dr|^{q}\dr_{\beta}^{n}dt\right)^{\frac{2}{q}}\left(r^{-2n-2}\iint_{P(r)}|u_{i\bar{j}}-\lambda_{i}\delta_{ij}|^{\frac{2q}{q-2}}\dr_{\beta}
^{n}dt\right)^{\frac{q-2}{q}}\nonumber\\ \leq&C\left(r^{-2n-2}\iint_{P(r)}|u_{i\bar{j}}-\lambda_{i}\delta_{ij}|^{2}\dr_{\beta}^{n}dt\right)
^{\frac{q-2}{q}}\iint_{P(r)}(1+|\nabla\dr|)^{2}\dr_{\beta}^{n}dt\nonumber\\ \leq&C\left(r^{-2n}\iint_{P(r)}|\nabla\dr|^{2}\dr_{\beta}^{n}dt
\right)^{\frac{q-2}{q}}\iint_{P(r)}(1+|\nabla\dr|)^{2}\dr_{\beta}^{n}dt.\nonumber
\end{align}
For the other part on the right hand side of \eqref{eq:lem-3.4-1}, we may assume $\frac{q}{q-2}>2,$ otherwise the following second
inequality follows from H\"{o}lder inequality:
\begin{align}\label{eq:lem-3.4-3}
 &\iint_{P(r)}|\hat{\dr}||\nabla\dr|^{2}\dr_{\beta}^{n}dt\\ \leq& r^{2n+2}\left(r^{-2n-2}\iint_{P(r)}|\nabla\dr|^{q}\dr_{\beta}^{n}dt
\right)^{\frac{2}{q}}\left(r^{-2n-2}\iint_{P(r)}|\hat{\dr}|^{\frac{q}{q-2}}\dr_{\beta}^{n}dt\right)^{\frac{q-2}{q}}\nonumber\\ \leq&C
\left(r^{-2n-2}\iint_{P(r)}|\hat{\dr}|^{2}\dr_{\beta}^{n}dt\right)^{\frac{q-2}{q}}\iint_{P(r)}(1+|\nabla\dr|)^{2}\dr_{\beta}^{n}dt\nonumber
\\ \leq&C\left(r^{-2n}\iint_{P(r)}|\nabla\hat{\dr}|^{2}\dr_{\beta}^{n}dt\right)^{\frac{q-2}{q}}\iint_{P(r)}(1+|\nabla\dr|)^{2}\dr_{\beta}
^{n}dt\nonumber\\ \leq&C\left(r^{-2n}\iint_{P(r)}(1+|\nabla\dr|^{2})\dr_{\beta}^{n}dt\right)^{\frac{q-2}{q}}\iint_{P(r)}
(1+|\nabla\dr|)^{2}\dr_{\beta}^{n}dt,\nonumber
\end{align}
where the last inequality follows from \eqref{eq:lem-3.4-1}.
Now apply Corollary \ref{cor-heat-energy} to v, we have that
\begin{align}\label{eq:lem-3.4-4}
 \iint_{P(\sigma)}|\nabla\dr|^{2}\dr_{\beta}^{n}dt&\leq\iint_{P(\sigma)}(|\nabla\hat{\dr}|^{2}+|\nabla v|^{2})\dr_{\beta}^{n}dt\\&
\leq c(\frac{\sigma}{r})^{2n-2+2\overline{\beta}^{-1}}\iint_{P(r)}|\nabla v|^{2}\dr_{\beta}^{n}dt+\iint_{P(r)}|\nabla\hat{\dr}|^{2}
\dr_{\beta}^{n}dt\nonumber\\&\leq c(\frac{\sigma}{r})^{2n-2+2\overline{\beta}^{-1}}\iint_{P(r)}(|\nabla\dr|^{2}+|\nabla\hat{\dr}|)
\dr_{\beta}^{n}dt+\iint_{P(r)}|\nabla\hat{\dr}|^{2}\dr_{\beta}^{n}dt\nonumber\\&\leq c(\frac{\sigma}{r})^{2n-2+2\overline{\beta}^{-1}}
\iint_{P(r)}|\nabla\dr|^{2}\dr_{\beta}^{n}dt+C\iint_{P(r)}|\nabla\hat{\dr}|^{2}\dr_{\beta}^{n}dt,\nonumber
\end{align}
Now combine \eqref{eq:lem-3.4-1} to \eqref{eq:lem-3.4-4}, \eqref{eq:local control} follows.
\end{proof}

      To conclude the main theorem, similar to \cite{Ti14}, we still need a lemma which controls the right hand side terms in
\eqref{eq:local control}:
\begin{lemma}\label{lem-small energy}
 For any $\epsilon_{0}>0,$ there is an $l$ depending only on $\epsilon_{0},||\Delta u||_{L^{\infty}},||\Delta F||_{L^{\infty}}$ satisfying:
For any $\tilde{r}>0$ with $P((y,t),\tilde{r})\subset U\times(-1,0],$ there exists $r\in[2^{-l}\tilde{r},\tilde{r}]$ such that
\begin{equation}\label{eq:small energy}
 r^{-2n}\iint_{P(r)}(1+|\nabla\dr|^{2})\dr_{\beta}^{n}dt\leq\epsilon_{0}.
\end{equation}
\end{lemma}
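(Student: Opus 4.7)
My plan is to argue by contradiction. Suppose the conclusion fails for some $\epsilon_0 > 0$, so that for each positive integer $l$ there exist $\tilde r > 0$ and a center with $P((y,t),\tilde r)\subset U\times(-1,0]$ for which $r_k^{-2n}\iint_{P(r_k)}(1+|\nabla\dr|^2)\dr_\beta^n dt > \epsilon_0$ at every dyadic scale $r_k := 2^{-k}\tilde r$, $k = 0,1,\ldots,l$. The strategy is to combine the weak Harnack inequality (Proposition \ref{prop-weak harnack}) with a dyadic telescoping to produce an intermediate scale at which the full Hessian matrix $(u_{p\bar q})$ has small $L^2$ oscillation, then feed this back into the computation inside the proof of Lemma \ref{lem-3.3} to contradict the assumption.

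For the first step, contracting the Hessian equation \eqref{eq:hessian} against $\xi_p\bar\xi_q$ for a constant vector $\xi$ shows that $h_\xi := u_{p\bar q}\xi_p\bar\xi_q$ satisfies $(\partial_t-\Delta')h_\xi \geq -C_\xi$ with $\Delta' := u^{i\bar j}\partial_i\partial_{\bar j}$, since the gradient-square term on the right is nonnegative. Hence $h_\xi - \inf_{P'(2r_k)}h_\xi$ is a nonnegative supersolution to which Proposition \ref{prop-weak harnack} applies. I would fix a finite direction set $S\subset\C$---for example $S = \{e_i\}_{i=1}^n\cup\{e_i+e_j\}_{i<j}\cup\{e_i+\sqrt{-1}e_j\}_{i<j}$---chosen so that every entry of $(u_{p\bar q})$ is a real-linear combination of $\{h_\xi\}_{\xi\in S}$, and set $m^\xi_r := \inf_{P'(r)}h_\xi$, which is non-increasing in $r$ and uniformly bounded by \eqref{eq:c2}. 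Applying the rescaled weak Harnack inequality at each $r_k$ yields
\begin{equation*}
|P'(r_k)|^{-1}\iint_{P'(r_k)}(h_\xi-m^\xi_{2r_k})\dr_\beta^n dt \leq C\bigl(m^\xi_{r_k/2}-m^\xi_{2r_k}+Cr_k^2\bigr),
\end{equation*}
and the sum $\sum_{k=0}^l(m^\xi_{r_k/2}-m^\xi_{2r_k})$ telescopes across the chain $r_{k+1}=r_k/2$ to at most $4\|h_\xi\|_\infty$, independent of $l$. After summing over $\xi\in S$ and pigeonholing over $k$, there exist $k^*\in\{0,\ldots,l\}$ and $r_*:=r_{k^*}$ with $m^\xi_{r_*/2}-m^\xi_{2r_*}\leq C/l$ for every $\xi\in S$.

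For this $r_*$ the weak Harnack $L^1$ bound, combined with $\|h_\xi\|_\infty\leq C$ and the pointwise inequality $(h_\xi-m^\xi)^2\leq\|h_\xi-m^\xi\|_\infty(h_\xi-m^\xi)$, gives $\iint_{P'(r_*)}(h_\xi-m^\xi_{2r_*})^2\dr_\beta^n dt \leq Cr_*^{2n+2}(1/l+r_*^2)$. Writing each entry $u_{p\bar q}$ as a real-linear combination of $\{h_\xi\}_{\xi\in S}$ and letting $\lambda_{p\bar q}$ denote the average of $u_{p\bar q}$ over $P(r_*)$ (which differs from the matching linear combination of $m^\xi_{2r_*}$ by only $O(1/l)$), we deduce $\iint_{P(r_*)}\sum_{p,q}|u_{p\bar q}-\lambda_{p\bar q}|^2\dr_\beta^n dt \leq Cr_*^{2n+2}/l+Cr_*^{2n+4}$. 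Plugging this into the intermediate estimate inside the proof of Lemma \ref{lem-3.3},
\begin{equation*}
\iint_{P(3r_*/4)}|\nabla\dr|^2\dr_\beta^n dt \leq Cr_*^{-2}\iint_{P(r_*)}\sum_{p,q}|u_{p\bar q}-\lambda_{p\bar q}|^2\dr_\beta^n dt + Cr_*^{2n+2},
\end{equation*}
and dividing by $r_*^{2n}$ yields $r_*^{-2n}\iint_{P(3r_*/4)}(1+|\nabla\dr|^2)\dr_\beta^n dt \leq C/l+Cr_*^2$, which for $l$ sufficiently large (depending only on $\epsilon_0$, $\|\Delta u\|_\infty$, $\|\Delta F\|_\infty$) and $\tilde r$ small contradicts the standing hypothesis. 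The principal obstacle I anticipate is tightening the error term appearing in Lemma \ref{lem-3.3}'s estimate from its naive size $O(r^{2n})$ (arising from the time-derivative of the parabolic cutoff) to $O(r^{2n+2})$; the resolution is to work on the symmetric cylinder $P'(r_*)$ with a symmetric cutoff and exploit the just-obtained matrix oscillation control to handle the $\dot\eta$-term as an oscillation integral of the integrand rather than as its bare sup norm.
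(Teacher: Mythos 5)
Your route is genuinely different from the paper's. The paper works with the single scalar $\Delta u$ and exploits a direct structural relation from \eqref{eq:double Laplacian}: since $(\Delta'-\dt)\Delta u = |\nabla\dr|^{2} - \Delta F$, multiplying by a cutoff and integrating immediately trades the energy $\iint\eta^{2}|\nabla\dr|^{2}$ for $\iint(M_{r}-\Delta u)$ by parts, after which the weak Harnack inequality gives $(r/2)^{-2n}\iint_{P'(r/2)}|\nabla\dr|^{2}\dr_{\beta}^{n}dt\leq C(M_{r}-M_{r/2}+r^{2})$ with $M_{r}:=\sup_{P'(r)}\Delta u$, and the telescoping contradiction is immediate. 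You instead polarize into a family $h_{\xi}=u_{p\bar q}\xi_{p}\bar\xi_{q}$, apply weak Harnack to each, pigeonhole to get $L^{2}$-oscillation control on the whole Hessian matrix, and then feed that oscillation into the energy estimate embedded in the proof of Lemma \ref{lem-3.3}. The paper's path is strictly shorter because $|\nabla\dr|^{2}$ appears verbatim in the equation for $\Delta u$; the polarization detour buys you matrix-level oscillation control, which you then have to convert back into an energy bound through Lemma \ref{lem-3.3}.

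There are two genuine problems with the argument as written. First, a sign error: from \eqref{eq:double Laplacian}, $(\dt-\Delta')\Delta u=-u^{i\bar q}u^{p\bar j}g^{k\bar l}u_{i\bar jk}u_{p\bar q\bar l}+\Delta F$, i.e. the gradient-square term enters with a \emph{minus} sign on this side, and the same holds after polarization, so $(\dt-\Delta')h_{\xi}\leq C_{\xi}$ and $h_{\xi}$ is a \emph{sub}solution. Proposition \ref{prop-weak harnack} requires a nonnegative supersolution, so you must run the telescoping on $M^{\xi}_{r}:=\sup_{P'(r)}h_{\xi}$ and apply the weak Harnack to $M^{\xi}_{r}-h_{\xi}$, not to $h_{\xi}-\inf h_{\xi}$. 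This is mechanical to fix. Second, and more seriously: the error term in the intermediate estimate inside Lemma \ref{lem-3.3} is $Cr^{2n}$, and after you divide by $r^{2n}$ this contributes a fixed constant $C$, not something small, so the contradiction does not yet follow. You flag this and sketch a repair (symmetric cylinder $P'(r_{*})$, symmetric time cutoff, treat the $\dot\eta$-term as an oscillation integral), and that repair is the right idea --- both $\iint\eta_{i\bar j}\,\dr_{\beta}^{n}dt=0$ for a spatially compactly supported $\eta$ and $\int\dot\eta_{2}\,dt=0$ for a symmetric time cutoff, so the two $O(r^{2n})$ contributions can be replaced by oscillation integrals bounded via Cauchy--Schwarz by $Cr^{2n}(r^{-2n-2}\iint|u_{p\bar q}-\lambda_{p\bar q}|^{2}\dr_{\beta}^{n}dt)^{1/2}$ --- but as it stands this is an unverified sketch, not a proof; the paper avoids the issue entirely by never routing through Lemma \ref{lem-3.3}. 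If you intend to keep your route, you need to actually carry out the variant of Lemma \ref{lem-3.3} on $P'$ with a symmetric cutoff and check that every $O(r^{2n})$ contribution is either exactly cancelled by the vanishing of $\iint\eta_{i\bar j}$ and $\iint\dot\eta_{2}$ or absorbed into the oscillation.
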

\begin{proof}
 Actually we only need to consider the integral of $|\nabla\dr|^{2}$ and use the domain $P'(r)$ to replace the smaller $P(r).$ Now by the
Hessian equation \eqref{eq:hessian} again, we have
\begin{equation}\label{eq:double Laplacian}
 (\Delta'-\dt)\Delta u=u^{i\bar{q}}u^{p\bar{j}}g^{k\bar{l}}u_{i\bar{j}k}u_{p\bar{q}\bar{l}}-\Delta F,
\end{equation}
where $\Delta'$ denotes the Laplacian of $\dr,$ and $g$ denotes the metric of $\dr_{\beta}.$

       Now denote $M_{r}=\sup_{P'(r)}\Delta u,$ and choose a cut-off function
$\eta=\eta_{1}(x)\eta_{2}(t)$ on $P(r)$ such that $\eta_{1}(x)=1$ on $B(y,\frac{r}{2}),\eta_{1}(x)=0$ near $\partial B(y,1),$ and
$\eta_{2}(t)=1$ on $[-\frac{r^{2}}{4},0],\eta_{2}(t)=0$ near $t=-r^{2},$ with $|\nabla\eta_{1}|^{2},|\nabla^{2}\eta_{1}|,|\dot{\eta_{2}}|
\leq\frac{C}{r^{2}}.$ Make use of the estimate in \eqref{eq:iteration1} of Propositon \ref{prop-weak harnack}, we have
\begin{align*}
 \iint_{P'(r)}\eta^{2}|\nabla\dr|^{2}\dr_{\beta}^{n}dt&\leq Cr^{2n+2}+C\iint_{P'(r)}\eta^{2}(\dt-\Delta')(M_{r}-\Delta u)\dr_{\beta}^{n}dt
\\&\leq Cr^{2n+2}+C\iint_{P'(r)}(-2\eta\dot{\eta}-\Delta'\eta^{2}+C|\nabla\eta|^{2}-R(\dr)\eta)(M_{r}-\Delta u)\dr_{\beta}^{n}dt\\&\leq
Cr^{2n+2}+\frac{C}{r^{2}}\iint_{P'(r)}(M_{r}-\Delta u)\dr_{\beta}^{n}dt.
\end{align*}
Make use of weak Harnack inequality \eqref{eq:weak harnack} in Propositon \ref{prop-weak harnack}, we obtain that
\begin{align*}
 (\frac{r}{2})^{-2n}\iint_{P'(\frac{r}{2})}|\nabla\dr|^{2}\dr_{\beta}^{n}dt&\leq C\left(r^{-2n-2}\iint_{P'(r)}(M_{r}-\Delta u)\dr_{\beta}
^{n}dt+r^{2}\right)\\&\leq C(\inf_{P'(\frac{r}{2})}(M_{r}-\Delta u)+r^{2})\leq C(M_{r}-M_{\frac{r}{2}}+r^{2}).
\end{align*}
Hence, if \eqref{eq:small energy} doesn't hold for $r=\frac{\tilde{r}}{2},\frac{\tilde{r}}{2^{2}},\cdots,\frac{\tilde{r}}{2^{k}},$ then
$$k\epsilon_{0}\leq C(M_{\tilde{r}}-M_{\frac{\tilde{r}}{2^{k}}}+2\tilde{r}^{2}).$$
This is impossible for sufficiently large $k.$ Then the lemma follows.
\end{proof}

Finally we can complete the proof of Theorem \ref{mainthm} by an iteration argument as \cite{Ti14}. Suppose $\sigma=\lambda r$ where
$0<\lambda<1.$ Then suppose $r$ satisfies Lemma \ref{lem-small energy}, and denote $H(r)=\iint_{P(r)}|\nabla\dr|^{2}\dr_{\beta}^{n}dt,$
by Lemma \ref{lem-3.4} we obtain that $$\sigma^{-2n}H(\sigma)+\sigma^{2(\overline{\beta}^{-1}-1)}\leq c(\lambda^{2(\overline{\beta}^{-1}
-1)}+\frac{\epsilon_{0}}{\lambda^{2n}})r^{-2n}H(r)+(c\frac{r^{4-2\overline{\beta}^{-1}}}{\lambda^{2n}}+\lambda^{2\overline{\beta}^{-1}
-2})r^{2(\overline{\beta}^{-1}-1)}.$$ Now for $\alpha\in(0,\overline{\beta}^{-1}-1,1\}),$ choose $\lambda\in(0,1),$ such that
$$\log 2(1+c)\leq-2(\overline{\beta}^{-1}-1-\alpha)\lambda.$$ Then we choose $\theta=2(1+c)\lambda^{2(\overline{\beta}^{-1}-1)}<1.$ After
that we choose $\epsilon_{0}\leq\frac{\theta\lambda^{2n}}{2}$ and $r_{0}$ such that $$c\frac{r^{4-2\overline{\beta}^{-1}}}{\lambda^{2n}}+
\lambda^{2\overline{\beta}^{-1}-2}\leq\theta,$$ where we need to note that we can assume $\overline{\beta}\geq\frac{1}{2}$ otherwise the
high order estimate follows from Calabi's 3rd estimate directly (\cite{Br}). Then we can obtain that $$\sigma^{-2n}H(\sigma)+
\sigma^{2(\overline{\beta}^{-1}-1)}\leq\theta(r^{-2n}H(r)+r^{2(\overline{\beta}^{-1}-1)}).$$
Now by iteration method, for $r<r_{0}$ and any positive integer $k$ we have that
\begin{align*}
(\lambda^{k}r)^{-2n}H(\lambda^{k}r)+(\lambda^{k}r)^{2(\overline{\beta}^{-1}-1)}&\leq\theta^{k}(r^{-2n}H(r)+r^{2(\overline{\beta}^{-1}-1)})
\\&=(2(1+c)\lambda^{2(\overline{\beta}^{-1}-1-\alpha)})^{k}(\lambda^{k}r)^{2\alpha}r^{-2\alpha}(r^{-2n}H(r)+r^{2(\overline{\beta}^{-1}-1)}).
\end{align*}
By the choice of $\lambda$ above, $2(1+c)\lambda^{2(\overline{\beta}^{-1}-1-\alpha)}\leq 1,$ then the main theorem \ref{mainthm} follows.

\end{document}